\DeclareMathAlphabet{\mathcal}{OMS}{cmsy}{m}{n}
\renewcommand\qedsymbol{\ding{111}}
\def\XXint#1#2#3{{\setbox0=\hbox{$#1{#2#3}{\int}$}
		\vcenter{\hbox{$#2#3$}}\kern-.5\wd0}}
\newcommand{\fin}{\hfill \qedsymbol}
\newcommand{\Csharp}{{\settoheight{\dimen0}{C}\kern-.09em \resizebox{!}{\dimen0}{\raisebox{\depth}{$\sharp$}}}}
\newtheoremstyle{theorem}
{5pt +1\p@ -2.0\p@}% above space (default)
{5pt +1\p@ -2.0\p@}% below space
{\it}			      % Body font
{}				  % Indent amount
{\bfseries}   % Theorem head font
{.}               % Punctuation after theorem head
{.4em}       % Space after theorem head
{}               % Theorem head spec (can be left empty, meaning `normal')
\theoremstyle{theorem}
\newtheorem{theorem}{Theorem}[section]
\newtheorem{corollary}[theorem]{Corollary}
\newtheorem{lemma}[theorem]{Lemma}
\newtheorem{remark}[theorem]{Remark}
\numberwithin{equation}{section}
\begin{document}
\title{\Large{Adams' trace principle in Morrey-Lorentz spaces on $\beta$-Hausdorff dimensional surfaces}}
\author{{Marcelo F. de Almeida}{\thanks{%
			de Almeida, M.F. was supported by CNPq:409306/2016, Brazil (Corresponding author).}} \\
	%EndAName
	{\small Universidade Federal de Sergipe, Departamento de Matem\'atica,} \\
	{\small {\ CEP 49100-000, Aracaju-SE, Brazil.}}\\
	{\small \texttt{e-mail:marcelo@mat.ufs.br}}\vspace{0.5cm}\\
	{Lidiane S. M. Lima}{\thanks{%
			Lima, L.S.M. was supported by CNPq:409306/2016, Brazil.}}\\
	{\small Universidade Federal de Goi\'as, IME - Departamento de Matem\'atica,} 
	\\
	{\small CEP 740001-970, Goiania-GO, Brazil.} \\
	{\small \texttt{e-mail:lidianesantos@ufg.br}}}
\date{}
\maketitle
\begin{abstract}
In this paper we strengthen to Morrey-Lorentz spaces the famous trace principle introduced by Adams. More precisely, we  show that Riesz potential $I_{\alpha}$ is continuous 
 \begin{equation}
\Vert I_{\alpha}f\Vert_{\mathcal{M}_{q, \infty}^{\lambda_{\ast}}(d\mu)}\lesssim \Arrowvert\mu\Arrowvert_{\beta}^{{1}/{q}}\,\Vert f\Vert_{\mathcal{M}_{p, \infty}^{\lambda}(d\nu)}\nonumber\\[0.02in]
\end{equation}
if and only if the Radon measure $d\mu$ supported in $\Omega\subset \mathbb{R}^n$ 
is controlled by $$\llbracket\mu\rrbracket_{\beta}=\sup_{x\in\mathbb{R}^n,\,r>0}r^{-\beta}\mu(B(x,r))<\infty$$ 
provided that  $1<p<q<\infty$ satisfies $n-\alpha p<\beta\leq n,\; \alpha=\frac{n}{\lambda}-\frac{\beta}{\lambda_\ast}\; \text{ and }\;\frac{\lambda_\ast}{q}\leq  \frac{\lambda}{p}\nonumber\,$. Our result provide a new class of functions spaces which is larger than previous ones, since we have strict continuous inclusions 
$
\dot{B}_{p,\infty}^{s}\hookrightarrow
L^{\lambda, \infty}\hookrightarrow \mathcal{M}_{p}^{\lambda}\hookrightarrow\mathcal{M}_{p, \infty}^{\lambda} \nonumber
$
as $1<p<\lambda<\infty$ and $s\in\mathbb{R}$ satisfies $\frac{1}{p}-\frac{s}{n}=\frac{1}{\lambda}$. If $d\mu$ is concentrated on $\partial\mathbb{R}^n_+$, as a byproduct we get Sobolev-Morrey trace inequality on half-spaces $\mathbb{R}^n_+$ which recovers the well-known Sobolev-trace inequality in $L^p(\mathbb{R}^n_+)$. Also, by a suitable analysis on non-doubling Cader\'on-Zygmund decomposition we show that 
\begin{equation}
\Vert M_{\alpha}f\Vert_{\mathcal{M}_{p, \ell}^{\lambda}(d\mu)}\,\sim\, 	\Vert I_{\alpha}f\Vert_{\mathcal{M}_{p, \ell}^{\lambda}(d\mu)}\nonumber 
\end{equation}
provided that $\mu(B_r(x))\sim r^{\beta}$ on support $\text{spt}(\mu)$ and $n-\alpha <\beta\leq n$ with $0<\alpha<n$. This result extends the previous ones. 
%and $\Omega \supseteq\text{spt}(\mu)$ is a smooth surface with %$\vert \widehat{d\mu}(\xi)\vert \lesssim \vert \xi\vert^{-\frac{k}{2}}$ %over smooth surfaces $\Omega$ with 
%$k$ non-vanishing principal curvatures then tracing principle can be applied.

\medskip\noindent\textbf{AMS MSC:}	31C15, 42B35, 42B37, 28A78

\medskip\noindent\textbf{Keywords:} Riesz potential, trace inequality, Morrey-Lorentz spaces, non-doubling measure
\end{abstract}

%\tableofcontents
\setcounter{equation}{0}\setcounter{theorem}{0}

\section{Introduction}
It is well known that doubling property of measures $\mu$ plays an important role in many topics of research in analysis on euclidean spaces, essentially because Vitali covering lemma and Calder\'on-Zygmund decomposition depend of the doubling property  $\mu(B_{2r}(x))\lesssim\mu(B_r(x))$ for all $x$ on support $\text{spt}(\mu)$ of measure $\mu$ and $r>0$. Recently it has been shown that fundamental results in analysis remain if doubling measures is replaced by a  \textit{growth condition}, namely, 
\begin{equation}\label{non-doubling}
\mu(B_r(x))\lesssim C\, r^{\beta}\;\; \text{ for all }\;\; x\in \text{spt}(\mu) \text{  and } \;\;r>0,
\end{equation}
where the implicit constant is independent of $\mu$ and $0<\beta\leq n$. For instance, we refer the pioneering work on Calder\'on-Zygmund theory for non-doubling measures \cite{Tolsa1,Tolsa, Tolsa2} and \cite{Nazarov}.  According to Frostman's lemma \cite[Chapter 1]{Mattila}, a measure $\mu$ satisfying \eqref{non-doubling} is close to Hausdorff measure and Riesz capacity of Borel sets  $\Omega\subset\mathbb{R}^n$. Essentially Frostman's lemma states that Hausdorff dimension of a Borel set $\Omega\subset\mathbb{R}^n$ is equal to 
\begin{equation}
\text{dim}_{\Lambda_{\beta}}\Omega=\sup\{\beta\in (0,n]: \exists\, \mu \in \mathcal{M}(\Omega) \text{ such that } \eqref{non-doubling} \text{ holds} \} =\sup \{\beta>0: \text{cap}_\beta(\Omega)>0\}\nonumber
\end{equation}
where $\Lambda_{\beta}(\Omega)$ denotes the $\beta-$dimensional Hausdorff measure and  $\text{cap}_\beta(\Omega)$ denotes the Riesz capacity,
\begin{equation}
\text{cap}_\beta(\Omega) = \sup\left\{ [\text{E}_{\beta}(\mu)]^{-1}\,:\, \text{E}_{\beta}(\mu) = \int_{\mathbb{R}^n} \int_{\mathbb{R}^n} \vert x-y\vert^{-\beta}d\mu(x)d\mu(y) \text{ for finite Borel measure } \mu \right\}\nonumber.
\end{equation}
The $L^p$-Riesz capacity on compact sets 
\begin{equation}
\dot{c}_{\alpha,p}(E)=\text{inf}\left\{\int_{\mathbb{R}^n}\vert f(x)\vert^p d\nu(x) \,:\, f\geq 0\text{ and } I_{\alpha}f(x)\geq 1 \text{ on } E\right\},\nonumber
\end{equation}
plays an important role in potential analysis, where $I_{\alpha}$ is defined  by 
\begin{align*}
I_{\alpha}f(x)=C_{\alpha,n}\int_{\mathbb{R}^n}\vert x-y\vert ^{\alpha-n}f(y)d\nu(y)\quad \text{ a.e. } x\in\mathbb{R}^n \quad \text{ as }\quad 0<\alpha<n
\end{align*}
where $d\nu$ stands for Lebesgue measure in $\mathbb{R}^n$. It is well known from  \cite[Theorem 7.2.1]{Adams-Hedberg} and \cite[Theorem 1]{Adams3,Dahlberg} that a necessary and sufficient condition for Sobolev embedding 
\begin{equation}
\dot{L}_{p}^{\alpha}(\mathbb{R}^n)\hookrightarrow  L^{q}(\Omega,\mu) \nonumber
\end{equation}
on the ``lower triangle'' $\,1<p\leq q<\infty$, $\,0<\alpha<n$ and $\,p<{n}/{\alpha}\,$ is given by the isocapacitary inequality
\begin{equation}\label{cap-cond}
\mu(E)\lesssim [\dot{c}_{\alpha,p}(E)]^{q/p},
\end{equation}
whenever $E$ is a compact subset of $\mathbb{R}^n$ and  $\mu$  is a Radon measure in  $\Omega$. Since $\dot{c}_{\alpha,p}(B_r(x))\cong r^{n-\alpha p}$, then \eqref{cap-cond} implies the \textit{growth condition} (\ref{non-doubling}) with $\beta= q(n/p-\alpha)$. Capacity  inequality is very difficult to verify even for compact sets, then one could ask: does the embedding $\dot{L}_{p}^{\alpha}(\mathbb{R}^n)\hookrightarrow L^{q}(\Omega,\mu)$ still hold if  (\ref{cap-cond}) is replaced by  (\ref{non-doubling})? %The pioneering work on characterization of  embedding  $\dot{L}_{\alpha}^{p}(\mathbb{R}^n)\hookrightarrow L^{q}(\Omega,\mu)$ by Radon measures $\mu$ satisfying  growth condition \eqref{non-doubling}, was first showed by Adams 
 In \cite[Theorem 2]{Adams0} Adams gave a positive answer to this question as $1<p<q<\infty$ and $\beta= q(n/p-\alpha)$ satisfies $0<\beta\leq n$ and $0<\alpha <n/p$. This theorem has a weak-Morrey version \cite[Theorem 5.1]{Adams} (see also  \cite[Lemma 2.1]{Xiao2}) and a strong Morrey version \cite[Theorem 1.1]{Liu}. Let us be more precise. The Morrey space $\mathcal{M}_r^\ell(\Omega,d\mu)$ is defined by the space of $\mu-$measurable functions $f\in L^r(\Omega\cap B_R)$ such that 
\begin{equation}
\Vert f \Vert_{\mathcal{M}^\ell_{r}(\Omega,d\mu)}=\sup_{x\,\in\, \text{spt}(\mu),\, R>0}R^{-\beta\left(\frac{1}{r}-\frac{1}{\ell}\right)} \left(\int_{B_{R}}\vert f(y)\vert^{r}d\mu{\lfloor_{\Omega}}\right)^{\frac{1}{r}}<\infty\nonumber,
\end{equation} 
where the supremum is taken on balls $B_R(x)\subset\mathbb{R}^n$, $1\leq r\leq \ell<\infty$ and $\beta>0$ denotes the Housdorff dimension of $\Omega$. In \cite{Liu} the space  $\mathcal{M}_r^\ell(d\mu)$ is denoted by $L^{r,\kappa}(d\mu)$ with  $\kappa/r=n/{\ell}$ and in \cite{Lucas-bilinear} is denoted by $\mathcal{M}_{r,\kappa}(d\mu)$ with $(n-\kappa)/r=n/\ell$. The Morrey-Lorentz space $\mathcal{M}^{\ell}_{r, s}(\Omega,d\mu)$ is defined by space of $\mu-$measurable function $f\in L^{r, s}(\Omega\cap B_R )$  such that 
\begin{equation}
\Vert f \Vert_{\mathcal{M}^{\ell}_{r, s}(\Omega,d\mu)}=\sup_{x\,\in\, \text{spt}(\mu),\, R>0}R^{-\beta \left(\frac{1}{r}-\frac{1}{\ell}\right)} \Vert f\Vert_{L^{r, s}(\Omega\cap B_R )}<\infty\label{morrey2},
\end{equation} 
where $L^{r, s}(\Omega\cap B_R )$ denotes the Lorentz space (see Section \ref{Lorentz}) defined by 
\begin{equation}
\Vert f\Vert_{L^{r, s}(\Omega\cap B_R)}=\left(r\int_0^{\infty}\left[t^rd_f(t)\right]^{\frac{s}{r}}\frac{dt}{t}\right)^{\frac{1}{s}}\nonumber,
\end{equation}
where $d_{f}(t)=\mu\lfloor_{B_R}(\{x\in \Omega\,:\,|f(x)|>t\})$ and  $\mu\lfloor_{B_R}(\Omega) =\mu (\Omega\cap B_R )$. According to \cite[Theorem 5.1]{Adams} if  the growth condition \eqref{non-doubling} holds and $\,1<p<q<\infty$ satisfies 
\begin{equation}\label{Triang}
\frac{q}{\lambda_\ast}\leq \frac{p}{\lambda},\quad  
0< \alpha<\frac{n}{\lambda},\quad  
n-\alpha p<\beta\leq n\;\text{ and } \; 
\frac{\beta}{\lambda_\ast}=\frac{n}{\lambda}-\alpha,
\end{equation} 
then 
\begin{equation}\label{weak-map}
I_{\alpha}:  \mathcal{M}_{p}^{\lambda}(\mathbb{R}^n,d\nu)\rightarrow \mathcal{M}_{q, \infty}^{\lambda_\ast}(\Omega,d\mu)
\end{equation} 
is a bounded operator. Since Morrey space is not closed by \textit{real interpolation}, the weak-trace theorem \cite[Theorem 5.1]{Adams} does not imply the strong trace version 
\begin{equation}\label{strong}
\left\Vert I_{\alpha}f\right\Vert _{\mathcal{M}^{\lambda_\ast}_{q}(d\mu)}\leq C\Vert f\Vert _{\mathcal{M}_{p}^{\lambda}(d\nu)}.
\end{equation}
However, by using the inequality Lemma \ref{lemma-point}-(i), continuity of fractional maximal function $M_{\gamma}: L^p(\mathbb{R}^n)\rightarrow L^{p\beta/(n-\gamma p)}(\Omega,d\mu)$ and atomic decomposition theorem in Hardy-Morrey space $\mathfrak{h}_p^{\lambda}(d\nu)=\mathcal{HM}_p^{\lambda}(\mathbb{R}^n,d\nu)$, 
\begin{equation*}
\Vert f\Vert_{\mathfrak{h}_p^{\lambda}}=\Big\Vert \sup_{t\in (0,\infty)} \vert \varphi_t\ast f\vert \Big\Vert_{\mathcal{M}_p^{\lambda}}<\infty\; \text{ with } \; \varphi_t=t^{-n}\varphi(x/t) \text{ for }\varphi \in \mathcal{S}(\mathbb{R}^n) \text{ and }f\in\mathcal{S}'(\mathbb{R}^n),
\end{equation*}  
Liu and Xiao \cite[Theorem 1.1]{Liu} showed that  $I_{\alpha}:  \mathfrak{h}_p^{\lambda}(d\nu)\rightarrow \mathcal{M}_{q}^{\lambda_\ast}(d\mu)$ is continuous if and only if  the Radon measure $\mu$ satisfy $\llbracket\mu\rrbracket_{\beta}<\infty$, provided that $1\leq p<q<\infty$ satisfies \eqref{Triang}. In particular, Liu and Xiao shown the strong trace inequality \eqref{strong} and since $\mathcal{M}^{\lambda_{\ast}}_{q}(d\mu)\subset \mathcal{M}^{\lambda_{\ast}}_{q, \infty}(d\mu)$ they get immediately a version of \eqref{strong} with weak-Morrey norm in the left-hand side. However, according to Sawano et al. \cite[Theorem\,1.2]{Gunawan} there is a function $g\in \mathcal{M}_{p, \infty}^{\lambda}(\mathbb{R}^n)$ such that $g\notin\mathcal{M}_{p}^{\lambda}(\mathbb{R}^n)$ and \cite[Theorem 1.1]{Liu} cannot  recover this case. This motivates us to study trace inequality in Morrey-Lorentz spaces. In particular,  under previous assumptions \eqref{Triang} we show that 
\begin{equation}
I_{\alpha}:  \mathcal{M}_{p, \infty}^{\lambda}(\mathbb{R}^n,d\nu)\rightarrow \mathcal{M}_{q, \infty}^{\lambda_\ast}(\Omega,d\mu)\nonumber
\end{equation}
is continuous if and only if  the Radon measure $\mu$ satisfy $\llbracket\mu\rrbracket_{\beta}<\infty$. Then we provide a new class of data for the trace theorem (see Theorem \ref{measure-restriction-thm}). The Lorentz space $L^{p, \infty}$ and functions space based on $L^{p, \infty}$ have been successful applied to study existence and uniqueness of \textit{mild solutions} for Navier-Stokes equations. The main effort in these works is to prove a bilinear estimate  
\begin{equation}
\Vert B(u,v)\Vert_{L^{\infty}((0,\infty); X)}\lesssim \Vert u\Vert_{L^{\infty}((0,\infty); X)}\,\Vert v\Vert_{L^{\infty}((0,\infty); X)}
\end{equation}
without invoke Kato's approach, see \cite{Lucas-bilinear} for weak-Morrey spaces, see \cite{Jhean-Ferreira} for Besov-weak-Morrey spaces  and  see \cite{yamazaki} for weak-$L^p$ spaces. For stationary Boussinesq equations, see  \cite{LJE} for Besov-weak-Morrey spaces  and  see \cite{Elder-Lucas} for weak-$L^p$ spaces. 

Choosing  a specific  $\mathfrak{h}_p^{\lambda}-$atom and using discrete Calder\'on reproducing formula in Hardy-Morrey spaces, from atomic decomposition theorem the authors \cite{Liu2} characterized the continuity of   $I_\alpha:\,\mathfrak{h}_{p}^{\lambda}(d\nu)\rightarrow \mathfrak{h}_{q}^{\lambda_\ast}(d\mu)$ by using the growth condition $\llbracket\mu\rrbracket_{\beta}<\infty$,  provided that $0<p<q<1$ satisfies \eqref{Triang}.  Meanwhile, it should be emphasized that $\mathcal{M}_{p\infty}^{\lambda}\neq \mathfrak{h}_{p}^{\lambda}$. Indeed, according to the Fourier decaying $\vert \widehat{f}(\xi)\vert \lesssim \vert \xi\vert ^{n(1/\lambda -1)}\Vert f\Vert_{\mathfrak{h}_{p}^{\lambda}(d\nu)}$ (see \cite[Corollary 3.3]{MP}) regular distributions $f\in \mathfrak{h}_{p}^{\lambda}$ satisfies $\int_{\mathbb{R}^n} f(x)dx=0$  when $0<p<\lambda<1$ which implies  $\vert x\vert^{-n/\lambda}\notin \mathfrak{h}_{p}^{\lambda}$, however $\vert x\vert^{-n/\lambda}\in \mathcal{M}_{p, \infty}^{\lambda}$.

If $d\mu$ is a doubling measure and satisfy $\llbracket\mu\rrbracket_{\beta}<\infty$, the authors of \cite[Theorem 1.1]{Liu3} showed that $I_{\alpha}$ is bounded from Besov space  $\dot{B}^s_{p,\infty}(\mathbb{R}^n,d\nu)$ to Radon-Campanato space $\mathcal{L}_{q}^{\lambda_\ast}(\mu)$ for suitable parameters $p,q,\lambda_{\ast}$ and $0<s<1$. Note that we have the continuous inclusions (see \cite[pg. 154]{BL} and \cite[Lemma 1.7]{KY})
\begin{equation}
\dot{H}_{p}^{s}\hookrightarrow\dot{B}_{p,\infty}^{s}\hookrightarrow
L^{\lambda, \infty}\hookrightarrow \mathcal{M}_{p}^{\lambda}\hookrightarrow\mathcal{M}_{p, \infty}^{\lambda}, \label{inclusion-2}%
\end{equation}
where $1<p<\lambda<\infty$ and $s\in\mathbb{R}$ satisfy $\frac{1}{p}-\frac{s}{n}=\frac{1}{\lambda}$.  In fact, the inclusions in (\ref{inclusion-2}) are strict and
then $\mathcal{M}_{p, \infty}^{\lambda}$ is strictly larger than Besov space $\dot
{B}_{p,\infty}^{s}$. So, our Theorem \ref{measure-restriction-thm} extends the previous trace results even when $d\mu$ is a non-doubling measure. 

\begin{theorem} \label{measure-restriction-thm}Let $1<p\leq \lambda<\infty$ and $1<q\leq\lambda_\ast<\infty$ be such that  $q/\lambda_\ast\leq p/\lambda$ for all $n-\delta p<\beta\leq n$ and $1<p<q<\infty$.  Then
 \begin{equation}
\Vert I_{\delta}f\Vert_{\mathcal{M}_{q, s}^{\lambda_{\ast}}(d\mu)}\lesssim \llbracket\mu\rrbracket_{\beta}^{{1}/{q}}\,\Vert f\Vert_{\mathcal{M}_{p, \ell}^{\lambda}(d\nu)}\nonumber\\[0.02in]
\end{equation}
if and only if the Radon measure $d\mu$ satisfy $\llbracket\mu\rrbracket_{\beta}<\infty$, provided that 
 $\;\,\delta=\frac{n}{\lambda}-\frac{\beta}{\lambda_\ast}$,  $\;\,0< \delta<{n}/{\lambda}\;$ and $\;1\leq \ell<s\leq \infty$.
\end{theorem}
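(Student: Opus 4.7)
I would begin with the standard test-function argument: for $x_0\in\operatorname{spt}(\mu)$ and $r>0$, insert $f=\chi_{B_{2r}(x_0)}$ in the inequality. Since $I_\delta f(y)\gtrsim r^\delta$ uniformly for $y\in B_r(x_0)$ and since a characteristic function has Lorentz norm comparable to the $q$-th root of the measure, one obtains
\[
\Vert I_\delta f\Vert_{\mathcal{M}_{qs}^{\lambda_\ast}(d\mu)}\gtrsim r^{\delta-\beta(1/q-1/\lambda_\ast)}\,\mu(B_r(x_0))^{1/q},
\]
while $\Vert f\Vert_{\mathcal{M}_{pl}^\lambda(d\nu)}\lesssim r^{n/\lambda}$. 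The scaling identity $\delta=n/\lambda-\beta/\lambda_\ast$ makes the $r$-exponents cancel precisely, leaving $\mu(B_r(x_0))\lesssim r^\beta$, which is the growth condition $\llbracket\mu\rrbracket_\beta<\infty$.

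\textbf{Sufficiency via a Hedberg-type pointwise bound.} For each $r>0$ I split
\[
I_\delta f(x)=\int_{B_r(x)}\vert x-y\vert^{\delta-n}f(y)\,d\nu+\int_{\mathbb{R}^n\setminus B_r(x)}\vert x-y\vert^{\delta-n}f(y)\,d\nu=:A_r(x)+B_r(x).
\]
A dyadic decomposition of $B_r(x)$ into the shells $B_{2^{-k}r}(x)\setminus B_{2^{-k-1}r}(x)$ gives $\vert A_r(x)\vert\lesssim r^{\delta-\gamma}M_\gamma^\nu f(x)$ for any $0<\gamma<\delta$, where $M_\gamma^\nu$ denotes the $\gamma$-fractional maximal operator associated to $d\nu$. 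For $B_r(x)$, I decompose $\mathbb{R}^n\setminus B_r(x)$ into dyadic annuli $B_{2^{k+1}r}(x)\setminus B_{2^kr}(x)$, apply Hölder's inequality in the Lorentz pair $(L^{pl}(d\nu),L^{p'l'}(d\nu))$ on each, and bound $\Vert f\chi_{B_{2^{k+1}r}}\Vert_{L^{pl}(d\nu)}$ by the Morrey-Lorentz norm of $f$; the resulting geometric series converges since $\delta<n/\lambda$ and yields $\vert B_r(x)\vert\lesssim r^{\delta-n/\lambda}\Vert f\Vert_{\mathcal{M}_{pl}^\lambda(d\nu)}$. Optimising $r$ to equalise the two bounds and choosing $\gamma=n/\lambda-q\beta/(p\lambda_\ast)$, which lies in $[0,\delta)$ under the hypotheses $q/\lambda_\ast\leq p/\lambda$ and $p<q$, delivers the Hedberg-type pointwise inequality
\[
\vert I_\delta f(x)\vert\lesssim\bigl(M_\gamma^\nu f(x)\bigr)^{p/q}\,\Vert f\Vert_{\mathcal{M}_{pl}^\lambda(d\nu)}^{\,1-p/q}.
\]

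\textbf{Closing the estimate and main obstacle.} Taking the $\mathcal{M}_{qs}^{\lambda_\ast}(d\mu)$-norm and using the homogeneity identity $\Vert h^{p/q}\Vert_{\mathcal{M}_{qs}^{\lambda_\ast}(d\mu)}=\Vert h\Vert_{\mathcal{M}_{p,sp/q}^{\lambda_\ast p/q}(d\mu)}^{p/q}$ reduces the theorem to the Morrey-Lorentz trace bound
\[
\Vert M_\gamma^\nu f\Vert_{\mathcal{M}_{p,sp/q}^{\lambda_\ast p/q}(d\mu)}\lesssim\llbracket\mu\rrbracket_\beta^{1/p}\,\Vert f\Vert_{\mathcal{M}_{pl}^\lambda(d\nu)}.
\]
I would establish this by first proving a weak Morrey bound via a Vitali-type covering of $\{M_\gamma^\nu f>t\}\cap B_R$ that exploits only the $\beta$-growth of $\mu$, and then interpolating this weak bound against the trivial $L^\infty$ estimate to land in the Lorentz scale at index $sp/q$. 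The hardest step is exactly this non-doubling fractional-maximal trace: the absence of doubling forces the covering argument to be driven by the growth $\mu(B_r)\lesssim\llbracket\mu\rrbracket_\beta r^\beta$ rather than by any geometric property of $\mu$, and the Lorentz indices $l,s$ have to be tracked carefully through both the tail Hölder inequality and the subsequent real interpolation. The range $1\leq l\lesssim s$ in the statement encodes exactly the slack these two ingredients consume.
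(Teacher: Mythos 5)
Your necessity argument is essentially the paper's (test function $\chi_{B_r}$, scaling cancellation), and your Hedberg-type pointwise bound for the sufficiency is valid: the near/far split $A_r+B_r$, the dyadic estimates, and the choice $\gamma=n/\lambda-q\beta/(p\lambda_\ast)$ giving $b=p/q$ are all algebraically correct and consistent with the hypotheses. The gap lies in the closing step. You reduce the theorem to the Morrey-Lorentz trace bound $\Vert M^\nu_\gamma f\Vert_{\mathcal{M}_{p,sp/q}^{\lambda_\ast p/q}(d\mu)}\lesssim\llbracket\mu\rrbracket_\beta^{1/p}\Vert f\Vert_{\mathcal{M}_{pl}^{\lambda}(d\nu)}$ and propose to prove it by a covering argument (which yields a weak Morrey estimate, Lorentz index $\infty$) followed by interpolation against a trivial $L^\infty$ endpoint. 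This runs into precisely the obstruction the paper flags in its introduction: Morrey spaces are not closed under real interpolation, so a weak Morrey bound cannot in general be upgraded to a strong Lorentz index $sp/q<\infty$ by interpolation. Moreover, the $L^\infty$ endpoint is not trivial for $M_\gamma^\nu$ with $\gamma>0$, and even if it were, interpolating would shift the integrability index away from $p$, not the Lorentz index toward $sp/q$. Your plan would only close the argument cleanly in the weak case $s=\infty$, whereas the theorem also covers finite $s$.

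The paper avoids this by a structurally different move: it localizes $f=f'+f''$ at the scale of the Morrey ball $B(x_0,2\rho)$ \emph{before} applying Hedberg. It then uses Lemma \ref{lemma-point}-(i) on $f'$ with $\alpha=n/\lambda$, so that $M_{n/\lambda}f'\lesssim\Vert f\Vert_{\mathcal{M}_{pl}^\lambda}$ directly, and the residual factor $M_\gamma f'$ is controlled by an $L^{pl}(d\nu)\to L^{\bar p l}(d\mu)$ trace on the fixed ball $B_\rho$, obtained from Adams' $L^p$-trace theorem and Hunt's theorem (Lemma \ref{Hunt-thm}). Hunt's theorem is legitimate here because both sides are Lorentz (not Morrey) spaces. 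The Morrey $\rho$-weight is then recovered from $\Vert f'\Vert_{L^{pl}(d\nu)}\lesssim\rho^{n(1/p-1/\lambda)}\Vert f\Vert_{\mathcal{M}_{pl}^\lambda}$. Thus all real interpolation happens inside the Lorentz scale and never touches the Morrey scale. Your unlocalized Hedberg bound pushes the difficulty onto a Morrey-to-Morrey trace for $M^\nu_\gamma$, which is exactly where the interpolation problem resurfaces; to close your argument you would either need to prove that strong Morrey trace for $M^\nu_\gamma$ by some other means, or localize first as the paper does.
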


A few remarks are in order. 
\begin{remark}\
 \begin{itemize}
		\item[(i)]\textbf{(Hardy-Littlewood-Sobolev)} Theorem \ref{measure-restriction-thm} implies 
		\begin{equation}
		\Vert I_{\delta}f\Vert_{\mathcal{M}_{q, s}^{\lambda_{\ast}}}\lesssim \llbracket\nu\rrbracket_{n}^{{1}/{q}}\,\Vert f\Vert_{\mathcal{M}_{p,\ell}^{\lambda}}\nonumber
		\end{equation}
		for $d\mu=d\nu$ and $\beta=n$. So, our theorem extend Hardy-Littlewood-Sobolev \cite[Theorem 9]{Ossen} for weak-Morrey spaces. However the optimality of  $q/\lambda_\ast\leq p/\lambda$ is known only for Morrey spaces  \cite[Theorem 10]{Ossen}. 
		\item[(ii)]\textbf{(Regularity on Morrey spaces)} If $u$ is a weak solution to fractional Laplace equation $(-\Delta)^{\frac{\delta}{2}}u=f$ in $\mathbb{R}^n$,
		\begin{equation*}
		(-\Delta)^{\frac{\delta}{2}}u(x):=C(n,\delta)\,\textbf{P.V.}\int_{\mathbb{R}^{n}}\frac{u(x)-u(y)}{\vert x-y\vert^{n+\delta}}d\nu(y)\quad \text{ with } \quad 0<\delta<2,
		\end{equation*}  
		then $u\in \mathcal{M}_{q,s}^{\lambda_{\star}}(\Omega,\,d\mu)$ if provided that $f\in \mathcal{M}_{p,\ell}^{\lambda}(\mathbb{R}^n,d\nu)$. 
		Indeed,  $u=I_{\delta}f$ is a weak solution of $(-\Delta_x)^{\frac{\delta}{2}}u=f$ because 
		\begin{equation}
		\left\langle (-\Delta)^{{\delta}/{2}}u,\, \widehat{\varphi}\right\rangle =\int_{\mathbb{R}^n}  \widehat{u}(\xi)\vert \xi\vert ^{\delta}\varphi(\xi)d\xi =\int_{\mathbb{R}^n}  \widehat{f}(\xi)\varphi(\xi)d\xi=\left\langle f, \widehat{\varphi}\right\rangle\nonumber
		\end{equation}
		for all $\varphi\in \mathcal{S}(\mathbb{R}^n)$. Then, Theorem \ref{measure-restriction-thm} give us  desired result.
		\item[(iii)]\textbf{(Adams' trace to surface-carried measures)} 
		Let $\Omega$ be a compact smooth surface with nonnegative second fundamental form and  
		$$\widehat{d\mu}(\xi)=\int_{\Omega}e^{-2\pi i x\cdot \xi} d\mu$$
		 the Fourier transform of  a measure $\mu$ supported on $\Omega$. If $\Omega$ has at  least $k$ non-vanishing principal curvatures at $\text{spt}(\mu)$, the stationary phase method (see Stein and Shakarchi \cite[Chapter 8]{Stein2.1}) gives the optimal decay 
		 $$\vert \widehat{d\mu}(\xi)\vert \lesssim \vert \xi\vert^{-\frac{k}{2}} \quad  \text{ as } \quad \vert\xi\vert>1.$$
Let $\,\phi\in \mathcal{S}(\mathbb{R}^n)$ be nonnegative, $\phi\gtrsim 1$ on $B(0,1)$ and $\widehat{\phi}=0$ on $\mathbb{R}^n\backslash B(0,R)$ for some $R>0$. 	Choosing $\phi_{x,r}(y)=\phi(\frac{x-y}{r})$  we have 
		\begin{align}
		\vert \mu(B_r(x))\vert &\lesssim \left\vert \int_{\mathbb{R}^n}\phi_{x,r}(y)d\mu(y) \right\vert = \left\vert \int_{\mathbb{R}^n}\widehat{\phi}_{x,r}(\xi)\widehat{\mu}(-\xi)d\xi \right\vert %=r^n \int_{\vert r\xi\vert\leq \delta}\widehat{\phi}(r\xi)\widehat{\mu}(-\xi)d\xi
		\nonumber\\
		  &\leq \int_{\vert \xi\vert\leq R}\vert \widehat{\phi}(\xi)\vert \vert \widehat{\mu}(-\xi/r)\vert d\xi\nonumber\\
		  &\lesssim r^{\frac{k}{2}}\int_{\vert \xi\vert\leq R}\vert \widehat{\phi}(\xi)\vert \,\vert \xi\vert^{-\frac{k}{2}}d\xi\nonumber\\
		  &\lesssim r^{k/2}\quad \text{ for all }\;\; x\in \text{spt}(\mu)\nonumber.
		\end{align}
It follows from Theorem \ref{measure-restriction-thm} that 
$\Vert I_{\delta}f\Vert_{\mathcal{M}_{q,s}^{\lambda_{\ast}}(\Omega,d\mu)}\lesssim \llbracket\mu\rrbracket_{k/2}^{{1}/{q}}\,\Vert f\Vert_{\mathcal{M}_{p,\ell}^{\lambda}}$, if provided that $f\in {\mathcal{M}_{p,\ell}^{\lambda}}$.
%\item[(iv)] Let support $\text{spt}(\mu)$ be a bounded domain $\Omega$ such that $\vert B_r(x_0)\cap \Omega\vert \geq C r^n$,  for  $x_0\in\Omega$ and $r\in (0, \text{diam}\,\Omega)$. According to  \cite[Theorem 6.II]{camp} the Campanato space $\mathcal{L}_{q}^{\lambda_{\ast}}(\Omega, \mu)$ is isomorphic to  $\mathcal{M}_q^{\lambda_{\ast}}(\Omega, \mu)$, provided $1\leq q<\lambda_{\ast}<n$.
 \end{itemize}
\end{remark}
Employing non-doubling Calderon-Zygmund decomposition \cite{Tolsa} we obtain the suitable  ``good-$\lambda$ inequality'' (see \eqref{keyM})
$$
\sum_j \mu(Q_j^t)\leq \mu(\{ x\,:\, (I_\alpha f)^{\sharp}(x)>3\epsilon t/4\})+\epsilon \sum_j \mu(Q_j^s)\; \text{ with }\;s=4^{-n-2}t\nonumber
$$
provided that $\mu$ satisfy \eqref{non-doubling}, where $(I_{\alpha}f)^{\sharp}$ denotes the (noncentered) sharp maximal function and $\{Q_j^t\}$ is a family of  doubling cubes, see Section \ref{CZ-cubes}. Then, by a suitable analysis we have the norm equivalence (see Theorem \ref{equiv-pot-frac-ML}) 
\begin{equation}\label{equiv-trace}
\Vert M_{\alpha}f\Vert_{\mathcal{M}_{p,\ell}^{\lambda}(d\mu)}\,\sim\, 	\Vert I_{\alpha}f\Vert_{\mathcal{M}_{p,\ell}^{\lambda}(d\mu)}
\end{equation}
provided that Radon measure $\mu$ such that $\mu(B_r(x))\sim r^{\beta}$  for all $x\in \text{spt}(\mu)$, where $0<\alpha<n$ satisfy $n-\alpha <\beta\leq n$  and $M_{\alpha}$ is defined as  (centered fractional maximal function)
\begin{equation}
M_{\alpha}f(x) = \sup_{r>0} r^{\alpha - n}\int_{\vert y-x\vert <r}\vert f(y)\vert d\nu \nonumber,
\end{equation}
for all locally integrable function $f\in L^1_{loc}(\mathbb{R}^n,d\nu)$. It should be emphasized that (\ref{equiv-trace}) is understood in sense of trace, since $M_\alpha$ and $I_\alpha$ are defined in $f\in L^1_{loc}(\mathbb{R}^n,d\nu)$ with Lebesgue measure $d\nu$. In particular, when $d\mu$ coincide with the Lebesgue measure $d\nu$, this equivalence recovers \cite[Theorem 4.2]{Adams2} for Morrey spaces. The proof of (\ref{equiv-trace}) is involved, because it requires a suitable analysis of non-doubling Calderon-Zygmund decomposition to yield ``good-$\lambda$ inequality" (see Lemma \ref{pot}) as well as the suitable pointwise estimate (see Lemma \ref{pointwise}) 
\begin{equation}
\overline{M}^{\sharp}I_{\alpha}f(x)\lesssim M_{\alpha}f(x)\nonumber
\end{equation}
whenever $\mu(B_r(x))\sim r^{\beta}$, where $\overline{M}^{\sharp}$ denotes the (centered) sharp maximal function. Note that (\ref{equiv-trace})  and Theorem \ref{measure-restriction-thm} yields a trace principle for $M_{\delta}$ if and only if  $\mu(B_r(x))\sim r^{\beta}$. However, the ``if part'' of trace principle for $M_{\delta}$ can be obtained directly from pointwise inequality $M_\delta f(x)\lesssim I_{\delta}\vert f(x)\vert$ and Theorem \ref{measure-restriction-thm}. The ``only if part''  is derived from the same technique used in Section \ref{only if part}. 

\begin{corollary}[Trace principle for $M_{\delta}$] \label{measure-restriction-thm2} Let $1<p\leq \lambda<\infty$ and $1<q\leq\lambda_\ast<\infty$ be such that  $q/\lambda_\ast\leq p/\lambda$ for all $n-\delta p<\beta\leq n$ and $1<p<q<\infty$.  Then,
	\begin{equation*}
	M_{\delta}:\mathcal{M}_{p,\ell}^{\lambda}(\mathbb{R}^n,d\nu)\longrightarrow\mathcal{M}_{q, s}^{\lambda_{\ast}}(\Omega,\,d\mu) \;\text{ is continuous}
	\end{equation*}
	if and only if $\,\llbracket\mu\rrbracket_{\beta}<\infty$,   for all
	$\;\,\delta=\frac{n}{\lambda}-\frac{\beta}{\lambda_\ast}$,  $\;\,0< \delta<{n}/{\lambda}\;$ and $\;1\leq \ell<s\leq \infty$. 
\end{corollary}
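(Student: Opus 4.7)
The plan is to derive the corollary directly from Theorem \ref{measure-restriction-thm}, bypassing the heavier norm equivalence \eqref{equiv-trace} in both implications: sufficiency rests on the trivial pointwise bound $M_\delta \lesssim I_\delta$, while necessity is obtained by testing the mapping property on characteristic functions of balls.

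For sufficiency, I assume $\llbracket \mu \rrbracket_\beta < \infty$. Since $|x-y|^{\delta-n} \geq \rho^{\delta-n}$ whenever $|x-y| \leq \rho$, taking the supremum over $\rho>0$ of the inequality obtained by restricting the integral defining $I_\delta|f|$ to $B_\rho(x)$ yields, after splitting $f=f^+-f^-$,
\begin{equation*}
M_\delta f(x) \leq C\, I_\delta |f|(x) \quad \text{a.e. } x \in \mathbb{R}^n.
\end{equation*}
Taking the $\mathcal{M}_{qs}^{\lambda_\ast}(\Omega,d\mu)$ quasi-norm and applying Theorem \ref{measure-restriction-thm} to $|f|$, together with the rearrangement-invariance identity $\||f|\|_{\mathcal{M}_{pl}^\lambda(d\nu)} = \|f\|_{\mathcal{M}_{pl}^\lambda(d\nu)}$, produces $\|M_\delta f\|_{\mathcal{M}_{qs}^{\lambda_\ast}(d\mu)} \lesssim \llbracket \mu \rrbracket_\beta^{1/q}\, \|f\|_{\mathcal{M}_{pl}^\lambda(d\nu)}$, giving continuity of $M_\delta$.

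For necessity, fix $x_0 \in \text{spt}(\mu)$ and $r>0$ and test with $f=\chi_{B_r(x_0)}$. A direct computation from \eqref{morrey2} shows $\|\chi_{B_r(x_0)}\|_{\mathcal{M}_{pl}^\lambda(d\nu)} \sim r^{n/\lambda}$, the Morrey supremum being attained at scale $R\sim r$. Choosing $\rho=2r$ in the supremum defining $M_\delta$ gives $M_\delta \chi_{B_r(x_0)}(y) \geq (2r)^{\delta-n}\nu(B_r(x_0)) \gtrsim r^\delta$ for every $y\in B_r(x_0)$. Restricting the Morrey-Lorentz supremum for $d\mu$ to $y=x_0$ and $R=r$, the $L^{qs}$ quasi-norm of the constant $r^\delta$ on a set of $\mu$-measure $\mu(B_r(x_0))$ yields
\begin{equation*}
\|M_\delta \chi_{B_r(x_0)}\|_{\mathcal{M}_{qs}^{\lambda_\ast}(d\mu)} \gtrsim r^{-\beta(1/q-1/\lambda_\ast)+\delta}\,\mu(B_r(x_0))^{1/q}.
\end{equation*}
Combining this with the assumed boundedness of $M_\delta$ and using the scaling relation $\delta = n/\lambda - \beta/\lambda_\ast$, the exponent of $r$ on the right collapses to $\beta/q$, so $\mu(B_r(x_0)) \lesssim r^\beta$ uniformly in $x_0\in \text{spt}(\mu)$ and $r>0$; that is, $\llbracket \mu \rrbracket_\beta <\infty$.

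The step one might expect to be delicate --- passing between $I_\delta$ and $M_\delta$ --- is here the easy pointwise direction, so none of the Calder\'on--Zygmund or good-$\lambda$ machinery behind Theorem \ref{equiv-pot-frac-ML} is needed. The only care required is correctly evaluating the Lorentz quasi-norms of characteristic and constant functions restricted to balls; once this bookkeeping is done, the corollary is genuinely immediate from Theorem \ref{measure-restriction-thm}.
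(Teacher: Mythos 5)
Your proof is correct, and it is arguably more careful than the paper's one-line dispatch, which merely declares the corollary immediate from Theorem \ref{measure-restriction-thm} together with the norm equivalence \eqref{equiv-trace} of Theorem \ref{equiv-pot-frac-ML}. Two points are worth noting. First, \eqref{equiv-trace} is established only under the stronger restriction $n-\delta<\beta\leq n$, whereas the corollary is asserted for all $n-\delta p<\beta\leq n$; your sufficiency argument, resting on the elementary pointwise bound $M_\delta f(x)\lesssim I_\delta|f|(x)$ (valid with no hypothesis on $\mu$ whatsoever), covers the full parameter range with nothing lost. Second, \eqref{equiv-trace} already presupposes $\llbracket\mu\rrbracket_\beta<\infty$, so invoking it in the necessity direction would be circular; your direct test with $\chi_{B_r(x_0)}$, which reproduces the paper's own necessity computation \eqref{key_pointwise} but using the lower bound $M_\delta\chi_{B_r(x_0)}\gtrsim r^\delta$ on $B_r(x_0)$ in place of the corresponding bound for $I_\delta$, avoids this entirely. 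The Lorentz and Morrey--Lorentz bookkeeping for the indicator and for the constant $r^\delta$ is right, and the exponent collapse to $\beta/q$ via $\delta=\frac{n}{\lambda}-\frac{\beta}{\lambda_\ast}$ matches the paper's calculation. In short, you do sufficiency the same way the paper implicitly intends (pointwise domination of $M_\delta$ by $I_\delta$), but you replace the paper's reliance on \eqref{equiv-trace} in the necessity direction with a self-contained test-function argument, which is both cleaner and strictly more general.
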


%It is well-known that embedding $\dot{L}^{p}_{\alpha}(d\nu) \subset L^{q}(d\mu)$ is equivalent to the characterization of measures $\mu$ for which 
%\begin{equation}\label{key_1}
%\int_{\Omega}\vert I_{\alpha}f(x)\vert ^q d\mu \lesssim \llbracket\mu\rrbracket_{\beta} \left( \int_{\mathbb{R}^n}\vert f(x)\vert^pd\nu\right)^{{q}/{p}} 
%\end{equation}
%for all $f\in L^p(d\nu)$. 

It is worth noting from an integral representation formula that $\Vert I_{k}f\Vert_{L^{q}(d\mu)} \lesssim \llbracket\mu\rrbracket_{\beta} \Vert f\Vert_{L^{p}(\mathbb{R}^n)}$ is equivalent to the trace inequality (see \cite[Corollary, p.67]{Mazya})
\begin{equation}\label{trace-lp}
\left(\int_{\Omega}\vert f(x)\vert ^qd\mu \right)^{\frac{1}{q}}\lesssim  \llbracket\mu\rrbracket_{\beta} \Vert f\Vert_{W^{k,p}(\mathbb{R}^n)} 
\end{equation}  
where $\Vert f\Vert_{W^{k,p}(\mathbb{R}^n)}=\sum_{\vert \gamma\vert \leq k}\Vert D^\gamma f\Vert_{L^{p}(\mathbb{R}^n)}$ for all $1<p<q<\infty$ and $\beta= q(n/p-k)>0$ with $0<k<n$. If $\Omega$ is a $W^{k,p}$-extension domain, that is, if there is a bounded linear operator $\mathcal{E}_k:W^{k,p}(\Omega)\rightarrow W^{k,p}(\mathbb{R}^n)$ such that $\mathcal{E}_kf|_{\Omega}=f$ for all $f\in W^{k,p}(\Omega)$, then \eqref{trace-lp} yields Sobolev trace inequality 
\begin{equation}
\left(\int_{\Omega}\vert f(x)\vert ^q d\mu\right)^{\frac{1}{q}} \lesssim   \llbracket\mu\rrbracket_{\beta} \Vert f\Vert_{W^{k,p}(\Omega)}\nonumber
\end{equation} 
provided that $\mu$ is a measure on $\Omega$ such that $\sup_{x\in\mathbb{R}^n,\,r>0}r^{-\beta}\mu(\Omega\cap B_r(x))<\infty$. From \cite[Theorem 5, p.181]{Stein} it is known that Lipschitz domain is a $W^{k,p}$-extension domain. Moreover, it is also known that $(\epsilon,\delta)$-locally uniform domain is a $W^{k,p}$-extension domain for all $1\leq p\leq \infty$ and $k\in \mathbb{N}$ (see \cite{Jones} and \cite{Roger}). Let us move to Sobolev-Morrey space $\mathcal{W}^{1,p}(\Omega)$ which is defined by 
\begin{equation}
\Vert f\Vert_{\mathcal{W}^{1,p}(\Omega)}=\sup _{x\in \Omega, r>0}\left(r^{p-n}\int_{B_r(x)\cap\Omega}\vert \nabla f\vert^pd\nu\right)^{1/p}\nonumber
\end{equation}
for all $f\in L^1_{loc}(\Omega)$ and $p\in[1,n]$. Employing a slight modification to the extension operator $\mathcal{E}_k$ of Jones \cite{Jones}, the authors of \cite{ext-Morrey} showed that an  $\epsilon$-uniform domain is a $\mathcal{W}^{1,p}$-extension domain. Since $\Omega=\mathbb{R}^n_+$ is a uniform domain, if $d\mu$ is  supported on  $\partial\mathbb{R}^n_+$ then from Theorem \ref{measure-restriction-thm} (or \cite[Theorem 1.1]{Liu}) in Morrey spaces with $\beta=n-1$ and integral representation formula \cite[(3.5)]{Adams} we obtain the Sobolev-Morrey trace inequality:
\begin{equation}\label{sobolev-trace-ineq}
	\left\Vert f(x^{\prime},0)\right\Vert _{\mathcal{M}_{q}^{\frac{\lambda(n-1)}{n-\lambda}}(\partial\mathbb{R}^{n}_+,dx')}\leq C\left\Vert\nabla f\right\Vert _{\mathcal{M}_{p}^\lambda(\mathbb{R}^{n}_{+})}
\end{equation} 
provided that $1<p\leq \lambda<n$ and $p<q\leq {\lambda(n-1)}/{(n-\lambda)}$. However we cannot apply directly \cite[Theorem 1.5(i)]{ext-Morrey} to yield \eqref{sobolev-trace-ineq}, since $\Vert f\Vert_{\mathcal{W}^{1,p}(\mathbb{R}^{n}_{+})}= \Vert \nabla f\Vert_{\mathcal{M}_p^{n}(\mathbb{R}^{n}_{+})}$ and $\lambda=n$. One could ask: does the Sobolev trace embedding \eqref{sobolev-trace-ineq}  holds for Morrey spaces or weak-Morrey spaces? As a byproduct of Theorem \ref{measure-restriction-thm}  and Calder\'on-Stein's extension on half-spaces (see Lemma \ref{lem-ext1}) we give a positive answer for this question.
\begin{corollary}[Sobolev-Morrey trace]\label{Trace} Let $\,1<p\leq \lambda<n$ and $1<q\leq\lambda_\ast<\infty$  be such that  $\,\frac{n-1}{\lambda_{\ast}}=\frac{n}{\lambda}-1\,$ and  $\,q/\lambda_\ast\leq p/\lambda$. Then 
	\begin{equation}
	\left\Vert f(x^{\prime},0)\right\Vert _{\mathcal{M}_{q, s}^{\lambda_{\ast}}(\partial\mathbb{R}^{n}_+,dx')}\leq C\left\Vert\nabla f\right\Vert _{\mathcal{M}_{p, d}^\lambda(\mathbb{R}^{n}_{+})},\nonumber
	\end{equation}
for all $1<p<q<\infty$ and $1\leq d<s\leq\infty$.
\end{corollary}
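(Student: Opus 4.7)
The plan is to reduce Corollary \ref{Trace} to Theorem \ref{measure-restriction-thm} on $\mathbb{R}^n$ with the specific choice of parameters $\delta=1$, $\beta=n-1$, and Radon measure $d\mu=dx'$ concentrated on $\partial\mathbb{R}^n_+\cong\mathbb{R}^{n-1}$. The three ingredients are: a half-space Sobolev-Morrey extension (Lemma \ref{lem-ext1}), a pointwise Riesz representation of a function by the Riesz potential of its gradient, and the trace principle already established.

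First I would apply the Calderón-Stein extension of Lemma \ref{lem-ext1} to produce $\tilde f:=\mathcal{E}f$ defined on the whole of $\mathbb{R}^n$, satisfying both $\tilde f|_{\mathbb{R}^n_+}=f$ and the Morrey-Lorentz gradient bound $\|\nabla \tilde f\|_{\mathcal{M}_{pd}^\lambda(\mathbb{R}^n)}\lesssim \|\nabla f\|_{\mathcal{M}_{pd}^\lambda(\mathbb{R}^n_+)}$. Second, I would invoke the classical integral representation \cite[(3.5)]{Adams}, namely $|\tilde f(x)|\lesssim I_1(|\nabla \tilde f|)(x)$ for a.e.\ $x\in\mathbb{R}^n$, which in particular yields
$$|f(x',0)|=|\tilde f(x',0)|\lesssim I_1(|\nabla\tilde f|)(x',0)\quad\text{for } (x',0)\in\partial\mathbb{R}^n_+.$$

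Third, the measure $d\mu=dx'$ satisfies $\mu(B_r(x)\cap\partial\mathbb{R}^n_+)\lesssim r^{n-1}$, so $\llbracket\mu\rrbracket_{n-1}<\infty$. I would then check that the hypotheses of Theorem \ref{measure-restriction-thm} are matched with $\beta=n-1$ and $\delta=1$: the relation $\delta=\frac{n}{\lambda}-\frac{\beta}{\lambda_\ast}$ is exactly $\frac{n-1}{\lambda_\ast}=\frac{n}{\lambda}-1$; the range condition $n-\delta p<\beta\leq n$ reduces to $p>1$; the positivity $0<\delta<n/\lambda$ to $\lambda<n$; and $q/\lambda_\ast\leq p/\lambda$ together with $1\leq d\lesssim s\leq \infty$ are the standing assumptions. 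Invoking Theorem \ref{measure-restriction-thm} with these data,
$$\|I_1(|\nabla\tilde f|)\|_{\mathcal{M}_{qs}^{\lambda_\ast}(\partial\mathbb{R}^n_+,dx')}\lesssim \llbracket\mu\rrbracket_{n-1}^{1/q}\,\|\nabla\tilde f\|_{\mathcal{M}_{pd}^\lambda(\mathbb{R}^n)},$$
and chaining with the two previous displays produces the claimed trace inequality.

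The principal technical obstacle sits in the first step: establishing Lemma \ref{lem-ext1}, i.e.\ checking that a Calderón-Stein-type reflection extension actually preserves the Morrey-Lorentz norm of the gradient. Classical Sobolev extension is immediate on a half-space via even reflection, but Morrey-Lorentz functionals are sensitive to the geometry of balls straddling the boundary, so one must verify that the pointwise control of $\nabla\tilde f$ by shifted and reflected copies of $|\nabla f|$ transfers through the $\sup_{x,R}R^{-\beta(1/r-1/\ell)}$ operation and through the Lorentz quasi-norm on the level set side. Once Lemma \ref{lem-ext1} is in hand, the rest of the argument is a direct concatenation of extension, representation, and Theorem \ref{measure-restriction-thm}.
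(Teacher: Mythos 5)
Your proposal is correct and follows essentially the same route as the paper: extend $f$ from the half-space via the Calderón--Stein operator of Lemma \ref{lem-ext1}, control $|f(x',0)|$ pointwise by $I_1(|\nabla \mathcal{E}f|)(x',0)$ through the integral representation \cite[(3.5)]{Adams}, and then invoke Theorem \ref{measure-restriction-thm} with $d\mu = dx'$ on $\partial\mathbb{R}^n_+$, $\beta = n-1$, and $\delta = 1$. Your explicit verification that the parameter constraints of Theorem \ref{measure-restriction-thm} collapse to $p>1$, $\lambda<n$, and $\frac{n-1}{\lambda_\ast}=\frac{n}{\lambda}-1$ is a useful unpacking that the paper leaves implicit, but the argument is the same.
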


\bigskip
The paper is organized as follows. In Section \ref{Lorentz} we summarize properties of Lorentz  spaces. In Section \ref{CZ-dec} we deal with non-doubling CZ-decomposition for polynomial growth measures and estimates for sharp maximal function. In Sections \ref{thm1} and \ref{extension} we prove our main theorems.

\section{The Lorentz spaces}\label{Lorentz}
Let $(\Omega,\mathcal{B}, \mu)$ be a measure space endowed by Borel regular measure $d\mu$. The Lorentz space $L^{p, d}(\Omega,\mu)$ is defined as the set of  $\mu$-\text{measurable functions} $f:\Omega\rightarrow\mathbb{R}$ such that 
\begin{equation}\label{almost_norm1} 
\Vert f\Vert_{L^{p, d}}^{\ast}=\left(\frac{d}{p}\int_0^{\mu (\Omega)}\left[t^{{1}/{p}}f^\ast(t)\right]^d \frac{dt}{t}\right)^{\frac{1}{d}}=\left(p\int_0^{\infty}\left[s^pd_f(s)\right]^{\frac{d}{p}}\frac{ds}{s}\right)^{\frac{1}{d}}<\infty
\end{equation}
for all $1\leq p<\infty$ and $1\leq d<\infty$, where 
\[
\text{ }f^{\ast}(t)=\inf\{s>0:d_{f}(s)\leq t\} \text{ and } d_{f}(s)=\mu(\{x\in\Omega:|f(x)|>s\}). 
\]
For $ 1\leq p\leq  \infty$ and $d=\infty$ the Lorentz space $L^{p, \infty}(\Omega,\mu)$ is defined by 
\begin{align}\label{almost_norm}
\Vert f\Vert_{L^{p,\infty}}^{\ast}=\sup_{0<t<\mu (\Omega)} t^{1/p}f^\ast(t)= \sup_{0<s<\mu (\Omega)} [s^{p}d_f(s)]^{1/p}.
\end{align}
The Lorentz space $L^{p,d}(\Omega,d\mu)$ increases with the index $d$,  that is, 
\begin{equation}
L^{p,1}\hookrightarrow L^{p,d_{1}}\hookrightarrow L^{p}\hookrightarrow L^{p,d_{2}}\hookrightarrow
L^{p,\infty}\nonumber
\end{equation}
provided that $1\leq d_{1}\leq p\leq d_{2}<\infty$. More precisely, we have the following lemma.
\begin{lemma}[Calder\'on]\label{inclusion-1}If $1\leq p<\infty$ and $0<q<r\leq\infty$, then $\Vert f\Vert_{L^{p,r}}\leq (q/p)^{\frac{1}{q}-\frac{1}{r}}\Vert f\Vert_{L^{p,q}}$. 
\end{lemma}

The quantities \eqref{almost_norm1} and \eqref{almost_norm} are not a norm, however  
\begin{equation}\label{norm1} 
\Vert f\Vert_{L^{p,d}}^{{\color{red}{\natural}}}=\left(\frac{d}{p}\int_0^{\mu (\Omega)}[t^{{1}/{p}}f^{\color{red}{\natural}}(t)]^d \frac{dt}{t}\right)^{\frac{1}{d}}<\infty \text{ with } 
f^{\color{red}{\natural}}(t)=\frac{1}{t} \int_0^t f^\ast(s)ds\nonumber
\end{equation}
defines a norm in $L^{p,d}(\Omega,d\mu)$ and one has 
\begin{equation}
\Vert f\Vert_{L^{p,d}}^{\ast}\leq\Vert f\Vert_{L^{p,d}}^{{\color{red}{\natural}}}\leq\frac{p}
{p-1}\Vert f\Vert_{L^{p,d}}^{\ast}\label{equiv-Lpk}\nonumber
\end{equation}
for all  $1< p<\infty$ and $1\leq d\leq \infty$. The following lemma is well-known in theory of Lorentz spaces.
\begin{lemma}[Hunt's Theorem \cite{Sharpley}]\label{Hunt-thm} Let $(M_1,\mu_1)$ and $(M_2,\mu_2)$ be measure spaces and  let $T$ be a sublinear operator such that 
	\begin{align}
	\Vert Tf\Vert_{L^{q_i, s_i}(M_1,d\mu_1)} \leq C_i\Vert f\Vert_{L^{p_i, r_i}(M_2,d\mu_2)}\nonumber\;\; \text{ for }\;\; i=0,1
	\end{align}
for all $p_0\neq p_1$ and $q_0\neq q_1$. Let $0<\theta<1$ be such that $1/p=(1-\theta)/p_0+\theta/p_1$ and $1/q=(1-\theta)/q_0+\theta/q_1$, then
\begin{align*}
	\Vert Tf\Vert_{L^{q, s}(M_1,d\mu_1)} \leq C_0^{\theta}C_1^{1-\theta}\Vert f\Vert_{ L^{p, r}(M_2,d\mu_2)}\nonumber,
\end{align*}
provided that $p\leq q$ and $0<r\leq s\leq \infty$, where $C_i>0$ depends only on $p_i,q_i,p,q$.
\end{lemma}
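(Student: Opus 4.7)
The plan is to follow the classical Marcinkiewicz--Hunt route: reduce the full Lorentz endpoints to restricted weak type, run a level-set decomposition, and rebuild the Lorentz information by integrating against the correct weight. First, using the chain of continuous inclusions \eqref{inclusion-1} on both sides, the hypotheses reduce to the restricted weak-type endpoints
\begin{equation*}
d_{Tf}(\sigma)\leq\bigl(\widetilde{C}_i\,\sigma^{-1}\Vert f\Vert_{L^{p_i,1}}\bigr)^{q_i}\qquad(i=0,1),
\end{equation*}
with $\widetilde{C}_i\lesssim C_i$, since $L^{p_i,1}\hookrightarrow L^{p_i,r_i}$ on the source and $L^{q_i,s_i}\hookrightarrow L^{q_i,\infty}$ on the target. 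Dually, by the embedding $L^{q,r}\hookrightarrow L^{q,s}$ (valid for $r\leq s$), the claimed bound on $\Vert Tf\Vert_{L^{q,s}}$ follows from the diagonal bound $\Vert Tf\Vert_{L^{q,r}}\lesssim\Vert f\Vert_{L^{p,r}}$. So the real content is Hunt's diagonal statement $L^{p,r}\to L^{q,r}$.

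Second, for each $u>0$ I split $f=g_u+h_u$ with $g_u:=f\,\chi_{\{|f|>f^{\ast}(u)\}}$ and $h_u:=f-g_u$; direct computation gives $g_u^{\ast}\leq f^{\ast}\chi_{[0,u]}$ and $h_u^{\ast}(t)\leq f^{\ast}(\min(t,u))$. Sublinearity $|Tf|\leq|Tg_u|+|Th_u|$ combined with the weak-type endpoints yields
\begin{equation*}
d_{Tf}(2\sigma)\leq\Bigl(\tfrac{\widetilde{C}_0\Vert g_u\Vert_{p_0,1}}{\sigma}\Bigr)^{q_0}+\Bigl(\tfrac{\widetilde{C}_1\Vert h_u\Vert_{p_1,1}}{\sigma}\Bigr)^{q_1},
\end{equation*}
and the two Lorentz-$1$ norms on the right are explicit Hardy averages of $f^{\ast}$ over $(0,u)$ and $(u,\infty)$. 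Matching scales by picking $\sigma\sim(Tf)^{\ast}(t)$ and $u$ a power of $t$ dictated by $1/p=(1-\theta)/p_0+\theta/p_1$ and $1/q=(1-\theta)/q_0+\theta/q_1$, one obtains the pointwise majorant
\begin{equation*}
(Tf)^{\ast}(t)\lesssim t^{-1/q}\int_0^t u^{1/p-1}f^{\ast}(u)\,du+t^{-1/p}\int_t^\infty u^{1/p-1}f^{\ast}(u)\,du,
\end{equation*}
valid under the standing assumption $p\leq q$, which keeps the second exponent nonpositive.

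Third, I raise the majorant to the $r$-th power, weight by $t^{r/q-1}$, integrate over $t\in(0,\mu_1(M_1))$ (or take a supremum when $r=\infty$), and apply Hardy's inequality
\begin{equation*}
\Bigl(\int_0^\infty\Bigl(\int_0^t\Phi(u)\,du\Bigr)^{r}t^{-\eta-1}\,dt\Bigr)^{1/r}\lesssim\Bigl(\int_0^\infty (t\Phi(t))^{r}t^{-\eta-1}\,dt\Bigr)^{1/r}\qquad(\eta>0)
\end{equation*}
together with its dual for the integral over $(t,\infty)$, which collapse the two weighted averages into $\Vert f\Vert_{L^{p,r}}$. A one-parameter optimization in the cutoff $u(\sigma)$---a Riesz--Thorin-style convex bookkeeping in $\log\sigma$---then produces the interpolation constant $C_0^{\theta}C_1^{1-\theta}$.

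The main obstacle I expect is the sign condition $\eta>0$ in the Hardy inequalities, which is exactly where the hypothesis $p\leq q$ enters essentially; the endpoint $r=\infty$ requires a separate weighted-sup replacement for Hardy; and all constants from the embeddings in Step 1 must be tracked carefully so that the final factor reproduces the geometric mean $C_0^{\theta}C_1^{1-\theta}$ rather than a weaker power of $C_0,C_1$.
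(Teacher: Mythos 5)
The paper offers no proof of this lemma: it is quoted as a known result and attributed to the reference [Bennett--Sharpley], so there is no internal argument to compare yours against. Your sketch follows exactly the classical Marcinkiewicz--Hunt route that underlies that reference (reduce to restricted weak type via the monotonicity \eqref{inclusion-1} in the second index, split $f$ at a height depending on the level, rebuild the Lorentz norm with Hardy's inequalities, optimize the cutoff to get the geometric-mean constant), and in that sense the architecture is the right one.

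There is, however, a concrete error in the one step you actually display. The pointwise majorant
\begin{equation*}
(Tf)^{\ast}(t)\lesssim t^{-1/q}\int_0^t u^{1/p-1}f^{\ast}(u)\,du+t^{-1/p}\int_t^\infty u^{1/p-1}f^{\ast}(u)\,du
\end{equation*}
is dimensionally inconsistent off the diagonal: testing on $f^{\ast}(u)=u^{-1/p+\epsilon}$ the first term scales like $t^{-1/q+\epsilon}$ and the second like $t^{\epsilon-(1/p-1/p)}\cdot t^{-1/p+1/p}=O(t^{\epsilon})\cdot t^{-1/p+1/p}$, and the two terms only match when $p=q$; moreover the correct exponent of $t$ on the output side must move along the interpolation segment in $(1/p,1/q)$, which your formula does not do. The correct majorant must carry the \emph{endpoint} exponents and a cutoff $u=t^{\gamma}$ with $\gamma=(1/p_0-1/p_1)/(1/q_0-1/q_1)$, namely
\begin{equation*}
(Tf)^{\ast}(t)\lesssim C_0\,t^{-1/q_0}\int_0^{t^{\gamma}}u^{1/p_0-1}f^{\ast}(u)\,du
+C_1\,t^{-1/q_1}\int_{t^{\gamma}}^{\infty}u^{1/p_1-1}f^{\ast}(u)\,du
+C_1\,t^{-1/q_1}t^{\gamma/p_1}f^{\ast}(t^{\gamma}),
\end{equation*}
after which the Hardy inequalities apply with exponents $1/q_0-1/q$ and $1/q-1/q_1$, which are nonzero precisely because $\theta\in(0,1)$ and $q_0\neq q_1$ --- not because $p\leq q$. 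So your attribution of the hypothesis $p\leq q$ to the sign condition in Hardy's inequality is misplaced; in this scheme $p\leq q$ and $r\leq s$ are used only in the final embedding $L^{q,r}\hookrightarrow L^{q,s}$ and in controlling the interpolation constant. Two smaller slips: the rearrangement bound for the truncation should read $h_u^{\ast}(t)\leq f^{\ast}(\max(t,u))$ (not $\min$), and the optimization of the cutoff that yields the clean product $C_0^{\theta}C_1^{1-\theta}$ needs to be carried out explicitly, since the naive choice $u=t^{\gamma}$ gives a constant that is only a $\theta$-dependent multiple of that product. With the corrected majorant the rest of your outline goes through.
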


\section{Maximal functions and non-doubling measure}\label{CZ-dec}
In this section we are interested in proving the estimate
\begin{equation}\label{equiv-pot-frac-ML0}
\Vert I_{\alpha}f\Vert_{\mathcal{M}_{p, \ell}^{\lambda}(d\mu)}\lesssim \Vert M^{\sharp}I_{\alpha}f\Vert_{\mathcal{M}_{p, \ell}^{\lambda}(d\mu)}\nonumber
\end{equation} 
for every Radon measure $\mu$ satisfying  \eqref{non-doubling}, where $M^{\sharp}f(x):=f^{\sharp}(x)$ denotes the uncentered sharp maximal function 
\begin{equation}\label{unc-sharp}
f^{\sharp}(x)=\sup_{Q,\; x\in Q}\left\{\frac{1}{\mu(Q)}\int_{Q}|f-f_Q|d\mu\right\}
\end{equation}
and $f_{Q}=\frac{1}{\mu(Q)}\int_{Q}fd\mu$.  Also, we are interested in proving the estimate
\begin{equation}
\overline{M}^{\sharp}I_{\alpha}f(x)\lesssim  M_{\alpha}f(x)\nonumber
\end{equation} 
when the Radon measure $\mu$ satisfy $\mu(B_r(x))\sim r^{\beta}$, where $\overline{M}^{\sharp}f(x):=f^{\overline{\sharp}}(x)$ denotes the centered sharp maximal function 
\begin{equation}\label{c-sharp}
f^{\overline{\sharp}}(x)=\sup_{r>0}\left\{\frac{1}{\mu(B_r(x))}\int_{B_r(x)}|f-f_{B_r}|d\mu\right\}.
\end{equation}

%Also, we denote by  $M_{\alpha}$ the centered fractional maximal function
%\begin{equation}M_{\alpha}f(x) = \sup_{r>0} r^{\alpha - n}\int_{\vert y-x\vert <r}\vert f(y)\vert d\nu \nonumber\end{equation}
% for all $f\in L^1_{loc}(d\nu)$.

\subsection{Non-doubling CZ-decomposition}\label{CZ-cubes}
Let us recall that a cube $Q\subset \mathbb{R}^n$ is called a $(\tau, \gamma)$-doubling cube with respect to polynomial growth measure $\mu$, if $\mu(\tau Q)\leq \gamma\,\mu(Q)$ as $\tau>1$ and $\gamma>\tau^{\beta}$. According to \cite[Remark 2.1 and Remark 2.2]{Tolsa} there are small/big  $(\tau,\gamma)-$doubling cubes in $\mathbb{R}^n$.
\begin{lemma}[\cite{Tolsa}]\label{tolsa-small/big} Let $\mu$ be a Radon measure in $\mathbb{R}^n$ with  growth condition \eqref{non-doubling}, then
	\begin{itemize}
		\item[(i)]\textbf{(Small doubling cubes)} Assume $\gamma>\tau^{n}$, then for $\mu$-a.e. $x\in\mathbb{R}^n$ there exists a sequence $\{Q_j\}_j$ of $(\tau, \gamma)$-doubling cubes centered at $x$ such that $\ell(Q_j)\rightarrow 0$ as $j\rightarrow\infty$.
		\item[(ii)]\textbf{(Big doubling cubes)} Assume $\gamma>\tau^{\beta}$, then for any $x\in\text{spt}(\mu)$ and $c>0$, there exists a  $(\tau,\gamma)$-doubling cube $Q$ centered at $x$ such that $\ell(Q)>c$.
	\end{itemize}
\end{lemma}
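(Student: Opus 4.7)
The plan is to prove each part by iterating the failure of the doubling condition at a fixed center, then contradicting either the growth bound \eqref{non-doubling} or Lebesgue differentiation for Radon measures.

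For part (ii), I would fix $x\in\text{spt}(\mu)$ and $c>0$ and suppose, for contradiction, that no $(\tau,\gamma)$-doubling cube of side greater than $c$ is centered at $x$. Setting $Q_k$ to be the cube centered at $x$ of side $\tau^k c$, the failure of the doubling property gives $\mu(Q_{k+1})=\mu(\tau Q_k)>\gamma\,\mu(Q_k)$, and iteration yields $\mu(Q_k)>\gamma^k\mu(Q_0)$. On the other hand, since $Q_k$ is contained in a ball of radius $\sqrt{n}\,\tau^k c$, the growth condition \eqref{non-doubling} forces $\mu(Q_k)\lesssim (\tau^k c)^\beta$. Combining these, $\mu(Q_0)\lesssim c^\beta(\tau^\beta/\gamma)^k\to 0$ as $k\to\infty$ since $\gamma>\tau^\beta$, hence $\mu(Q_0)=0$, contradicting $x\in\text{spt}(\mu)$.

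For part (i) the same iteration is run in the opposite direction. Let $E$ denote the set of points $x$ for which the claim fails; for each $x\in E$ there is $r_x>0$ such that every cube $Q_r(x)$ with $r<r_x$ satisfies $\mu(\tau Q_r(x))>\gamma\,\mu(Q_r(x))$. Choosing $Q_k$ centered at $x$ of side $\tau^{-k}r_x$, the non-doubling at each scale gives $\mu(Q_{k-1})=\mu(\tau Q_k)>\gamma\,\mu(Q_k)$, so iteration yields $\mu(Q_k)<\gamma^{-k}\mu(Q_0)$ and therefore
\begin{equation*}
\frac{\mu(Q_k)}{\ell(Q_k)^n}\leq\left(\frac{\tau^n}{\gamma}\right)^k\frac{\mu(Q_0)}{r_x^n}\longrightarrow 0\quad\text{as } k\to\infty,
\end{equation*}
because $\gamma>\tau^n$. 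Thus the upper $n$-density $\limsup_{r\to 0}\mu(Q_r(x))/r^n$ vanishes at every $x\in E$.

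To close part (i) I would invoke the standard fact that the upper $n$-density of any Radon measure on $\mathbb{R}^n$ is positive at $\mu$-a.e.\ point: assuming $\mu(E)>0$, I would localize to a bounded piece of $E$, cover it for each $\epsilon>0$ by a fine family of cubes $Q$ with $\mu(Q)<\epsilon\,\ell(Q)^n$, apply Besicovitch's covering theorem to split this fine cover into $N(n)$ disjoint subfamilies (whose aggregate Lebesgue volume is bounded by that of a fixed bounded open set), and sum to reach $\mu(E)\lesssim\epsilon$; letting $\epsilon\to 0$ forces $\mu(E)=0$, contrary to the hypothesis. I expect this last Besicovitch-type step to be the main obstacle, since in the non-doubling setting the usual $5r$-covering argument is unavailable and one must rely on a purely geometric covering theorem in $\mathbb{R}^n$. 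Note that the asymmetry of the threshold conditions $\gamma>\tau^n$ in (i) versus $\gamma>\tau^\beta$ in (ii) precisely reflects the ambient versus intrinsic dimensional nature of the two contradictions.
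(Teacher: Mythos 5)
The paper does not give its own proof of this lemma; it cites Tolsa (Remarks 2.1 and 2.2), and your argument reproduces precisely that proof --- iterate the failure of the doubling condition, then in (ii) contradict the growth bound \eqref{non-doubling} at a point of the support, and in (i) contradict, via a Besicovitch covering argument, the fact that a Radon measure on $\mathbb{R}^n$ has positive upper $n$-density $\mu$-a.e. The only cosmetic slip is that in (ii) the hypothesis rules out doubling only for cubes of side strictly greater than $c$, so the iteration should start from $Q_1$ rather than $Q_0$ (concluding $\mu(Q_1)=0$, which still contradicts $x\in\text{spt}(\mu)$); this shifts one index and does not affect the argument.
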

Let  $f\in L^1_{loc}(\mu)$ and $\lambda>\frac{1}{\mu(Q_0)}\Vert f\Vert_{L^1(Q_0)}$ be such that $\Omega_{\lambda}=\{x\in Q_0\,:\, \vert f(x)\vert>\lambda\}\neq \varnothing $. From Lemma \ref{tolsa-small/big}-(i) and Lebesgue differentiation theorem, there is a sequence of $(2,2^{n+1})-$doubling cubes $\{Q_j(x)\}_j$ with $\ell(Q_j)\rightarrow 0$ such that 
\begin{equation}
\frac{1}{\mu(Q_j)}\int_{Q_j}\vert f\vert d\mu >\lambda \nonumber
\end{equation}
for $j$ sufficiently large. Since there are big $(2,2^{n+1})-$doubling cubes $Q_j$, then  $\frac{1}{\mu(Q_j)}\int_{Q_j}\vert f\vert d\mu \leq \frac{\Vert f\Vert_{L^1(\mu)}}{\mu(Q_j)}\leq \lambda$ for $\mu(Q_j)>c$ sufficiently large. In other words, for $\mu$-almost all $x\in \mathbb{R}^n$ such that $\vert f(x)\vert>\lambda$ there is a $(2,2^{n+1})-$doubling cube $Q'\in \{Q_x\}_{x\in \Omega_{\lambda}}$ with center $x=x_Q$ such that 
\begin{equation}
\frac{1}{\mu(2Q')}\int_{Q'}\vert f\vert d\mu \leq \lambda/2^{n+1}\nonumber.
\end{equation}
Moreover, if $Q=Q(x)$ is a $(2,2^{n+1})-$doubling cube with sidelength $\ell(Q)<\ell(Q')/2$ then 
\begin{equation}
\frac{1}{\mu(Q)}\int_{Q}\vert f\vert d\mu >\lambda.\nonumber
\end{equation}
Hence, a non-doubling Calder\'on-Zygmund decomposition can be obtained. To simply, a doubling cube $Q$ mean $(2,2^{n+1})$-doubling cube.

\begin{lemma}[\cite{Tolsa} non-doubling CZ-decomposition] \label{CZ}Let the Radon measure $\mu$ satisfy \eqref{non-doubling}. Let $Q$ be a doubling cube  big so that $\lambda >\frac{1}{\mu(Q)}\int_{Q} \vert f\vert d\mu$ for $f\in L^1(\mu)(Q)$. Then, there is a sequence of  doubling cubes $\{Q_j\}_j$ such that %every point $z\in Q$ belongs at most $\theta_n$ cubes $Q_j$, i.e. $\sum_j \chi _{Q_j}(z)\leq \theta_n$ and moreover, 
	\begin{itemize}
		\item[(i)] $\vert f(x)\vert \leq \lambda\;$ for  $\;x\in Q\backslash 	\bigcup_jQ_j$,  $\;\mu$-a.e.
		\item[(ii)] $\lambda < \frac{1}{\mu(Q_j)}\int_{Q_j} \vert f\vert d\mu \leq 4^{n+1}\lambda $
		\item[(iii)] $	\bigcup_jQ_j = \bigcup_{k=1}^{\varepsilon_n}\bigcup_{Q_j^k\in \mathcal{F}_k}Q_j^k$,
	\end{itemize}
where the family $\mathcal{F}_k=\{Q^k_j\}$ is pairwise disjoint.
\end{lemma}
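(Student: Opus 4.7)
The plan is to follow Tolsa's stopping-time construction for non-doubling Calder\'on--Zygmund decompositions, which is implicit in the discussion right before the lemma. The ingredients already available are Lemma \ref{tolsa-small/big} (existence of small and big $(2,2^{n+1})$-doubling cubes) and the non-doubling Lebesgue differentiation theorem applied along doubling cubes; the rest is a covering argument plus a maximality argument for the average.

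\textbf{Step 1 (stopping cubes).} Set $\Omega_{\lambda}=\{x\in Q:\vert f(x)\vert>\lambda\}$. For each $x\in\Omega_{\lambda}$ outside a $\mu$-null set, Lemma \ref{tolsa-small/big}\,(i) together with Lebesgue differentiation along $(2,2^{n+1})$-doubling cubes produces arbitrarily small doubling cubes centered at $x$ whose $f$-average exceeds $\lambda$; Lemma \ref{tolsa-small/big}\,(ii) and the hypothesis $\lambda>\mu(Q)^{-1}\int_{Q}\vert f\vert\,d\mu$ provide big doubling cubes on which the average is $\leq\lambda$. Hence we may define $Q_{x}$ to be the (essentially) largest $(2,2^{n+1})$-doubling cube centered at $x=x_{Q_{x}}$ with $\mu(Q_{x})^{-1}\int_{Q_{x}}\vert f\vert\,d\mu>\lambda$.

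\textbf{Step 2 (pointwise bound and (i)).} By construction $\{Q_{x}\}_{x\in\Omega_{\lambda}}$ covers $\Omega_{\lambda}$ modulo a $\mu$-null set, which immediately gives (i) once we extract a subfamily $\{Q_{j}\}$ (done in Step 3). The lower bound in (ii) is built into the definition of $Q_{x}$. For the upper bound, the key observation, already highlighted in the paragraph preceding the lemma, is that any $(2,2^{n+1})$-doubling cube $Q'\supsetneq Q_{x}$ centered at $x$ must satisfy $\mu(Q')^{-1}\int_{Q'}\vert f\vert\,d\mu\leq\lambda$ by maximality. Choosing the smallest such doubling ancestor (whose existence again follows from Lemma \ref{tolsa-small/big}) and using the $(2,2^{n+1})$-doubling property together with the inclusion $Q_{x}\subset 2Q_{x}\subset Q'$, one estimates
\[
\frac{1}{\mu(Q_{x})}\int_{Q_{x}}\vert f\vert\,d\mu\;\leq\;\frac{\mu(Q')}{\mu(Q_{x})}\,\frac{1}{\mu(Q')}\int_{Q'}\vert f\vert\,d\mu\;\leq\;4^{n+1}\lambda,
\]
the factor $4^{n+1}$ being the worst-case ratio $\mu(Q')/\mu(Q_{x})$ produced by at most one doubling step on $2Q_{x}$.

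\textbf{Step 3 (Besicovitch decomposition and (iii)).} The family $\{Q_{x}\}_{x\in\Omega_{\lambda}}$ consists of cubes centered at points of a bounded set, so by Besicovitch's covering theorem in $\mathbb{R}^{n}$ one can extract a countable subcover $\{Q_{j}\}$ that still covers $\Omega_{\lambda}$ up to a $\mu$-null set and further splits into $\varepsilon_{n}$ pairwise disjoint subfamilies $\mathcal{F}_{1},\dots,\mathcal{F}_{\varepsilon_{n}}$, where $\varepsilon_{n}$ depends only on the dimension. This yields (iii). Property (i) on $Q\setminus\bigcup_{j}Q_{j}$ then holds $\mu$-a.e.\ by the standard Lebesgue differentiation argument: at every $x\in Q\setminus\bigcup_{j}Q_{j}$ that is a Lebesgue point of $f$ with respect to the $(2,2^{n+1})$-doubling basis we must have $\vert f(x)\vert\leq\lambda$, since otherwise $x\in\Omega_{\lambda}$ would force $x\in Q_{j}$ for some $j$.

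\textbf{Main obstacle.} The genuinely delicate point is the upper bound in (ii): in the non-doubling setting one cannot simply pass from $Q_{x}$ to $2Q_{x}$ because $2Q_{x}$ need not be doubling, and a chain of non-doubling dilates can blow up $\mu$ enormously. The rescue is that the \emph{stopping} cube $Q_{x}$ is chosen to be doubling, so one has the clean bound $\mu(2Q_{x})\leq 2^{n+1}\mu(Q_{x})$, and the first doubling ancestor $Q'$ of $2Q_{x}$ still sits inside $4Q_{x}$ or similar, contributing at most another factor $2^{n+1}$ and hence the stated $4^{n+1}$. Everything else (existence of $Q_{x}$, covering (iii), and (i)) is a direct application of Lemma \ref{tolsa-small/big} and Besicovitch's theorem.
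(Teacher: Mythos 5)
Your overall route (stopping-time selection of doubling cubes plus a Besicovitch extraction) is the one the paper follows via Tolsa's Lemma 2.4, but the argument you give for the upper bound in (ii) has a genuine gap. In Step 2 you need $\mu(Q')/\mu(Q_x)\le 4^{n+1}$, where $Q'$ is the smallest doubling cube strictly containing $Q_x$, and in the ``Main obstacle'' paragraph you justify this by asserting that ``the first doubling ancestor $Q'$ of $2Q_x$ still sits inside $4Q_x$ or similar.'' That assertion is false in the non-doubling setting and is precisely the difficulty you flag: Lemma \ref{tolsa-small/big}(ii) only produces \emph{some} big doubling cube centered at $x$, with no control on how much larger it is than $2Q_x$, so the dilates $2Q_x,4Q_x,8Q_x,\dots$ may all fail to be doubling for arbitrarily many steps and $\mu(Q')/\mu(Q_x)$ can be as large as one likes. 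Taking $Q_x$ maximal among \emph{doubling} cubes with average $>\lambda$ gives you no control on $4Q_x$, since $4Q_x$ need not be doubling at all; maximality of $Q_x$ therefore does not bound the $f$-average over $Q_x$.

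The paper does not re-derive the construction; it cites Tolsa's Lemma 2.4, whose stopping rule is phrased over \emph{all} concentric dilates via $\frac{1}{\mu(2\eta Q)}\int_{\eta Q}|f|\,d\mu\le \lambda/2^{n+1}$ for every $\eta>2$ (not only for the next doubling ancestor), and then reads off the constant by comparing $Q_j$ with the fixed dilate $4Q_j$:
\[
\frac{1}{\mu(Q_j)}\int_{Q_j}|f|\,d\mu \;\le\; \frac{\mu(4Q_j)}{\mu(Q_j)}\cdot\frac{2^{n+1}}{\mu(8Q_j)}\int_{4Q_j}|f|\,d\mu \;\le\; 4^{n+1}\lambda,
\]
where the doubling inequalities $\mu(8Q_j)\le 2^{n+1}\mu(4Q_j)$ and $\mu(4Q_j)\le 4^{n+1}\mu(Q_j)$ are built into Tolsa's choice of $Q_j$. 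To repair your proof you would have to replace the ``largest doubling cube with average $>\lambda$'' stopping rule by Tolsa's dilate-based rule; the factor $4^{n+1}$ then emerges from the selection itself, not from any bounded-gap property of consecutive doubling cubes, which polynomial-growth measures simply do not have.
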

\begin{proof} This lemma is a consequence of Besicovitch's covering theorem and has been proved by Tolsa \cite[Lemma 2.4]{Tolsa}. Note that \cite[Lemma 2.4]{Tolsa} with $\eta=4$ implies 
	\begin{align*}
 \frac{1}{\mu(Q_j)}\int_{Q_j} \vert f\vert d\mu \leq \frac{\mu(\eta Q_j)}{\mu(Q_j)}\left( \frac{1}{\mu(\eta Q_j)}\int_{\eta Q_j} \vert f\vert d\mu\right)\leq 
 \frac{\mu(\eta Q_j)}{\mu(Q_j)}\left( \frac{2^{n+1}}{\mu(2\eta Q_j)}\int_{\eta Q_j} \vert f\vert d\mu\right)\leq 4^{n+1}\lambda,
	\end{align*}
thanks to $\mu (2\eta Q_j)\leq 2^{n+1}\mu (\eta Q_j)$ and $\mu(4Q_j)\leq 4^{n+1}\mu (Q_j)$.
\end{proof}

\subsection{Estimates for sharp maximal function}
Inspired in \cite[p.153]{Stein0} we prove the following lemma.
\begin{lemma}\label{pot} Let $\mu$ be a Radon measure in $\mathbb{R}^n$ such that  $ \llbracket\mu\rrbracket_{\beta}<\infty\,$ for $\,0<\beta\leq n$. If $\,I_{\alpha}f\in L^1_{loc}(d\mu)$ for $0<\alpha<n$, then 
	\begin{equation}
\Vert I_{\alpha}f\Vert_{\mathcal{M}_{p, \ell}^{\lambda}(d\mu)}\lesssim \Vert (  I_{\alpha}f)^{\sharp}\Vert_{\mathcal{M}_{p, \ell}^{\lambda}(d\mu)},
	\end{equation} 
	for every $1\leq p\leq\lambda<\infty$ and $1\leq \ell\leq \infty$. 
\end{lemma}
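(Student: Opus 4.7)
The plan is to follow the good-$\lambda$ strategy indicated just before the statement, producing the inequality
\[
\sum_j \mu(Q_j^t)\leq \mu\bigl(\{x:(I_\alpha f)^{\sharp}(x)>3\epsilon t/4\}\bigr)+\epsilon \sum_j \mu(Q_j^s),\qquad s=4^{-n-2}t,
\]
and then converting it to a distribution-function inequality that can be integrated to recover the Morrey-Lorentz norm. I would first fix a ball $B_R=B_R(x_0)$ with $x_0\in\operatorname{spt}(\mu)$. Using Lemma \ref{tolsa-small/big}(ii), choose a $(2,2^{n+1})$-doubling cube $Q_0\supset B_R$ whose sidelength is large enough that $t>\frac{1}{\mu(Q_0)}\int_{Q_0}|I_\alpha f|\,d\mu$ for all levels $t$ relevant to the $L^{p\ell}(\mu\lfloor B_R)$ norm (this will be justified by a preliminary truncation $|I_\alpha f|\wedge N$ to ensure a priori finiteness). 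Applying Lemma \ref{CZ} at the two levels $t$ and $s=4^{-n-2}t$ produces doubling families $\{Q_j^t\}$ and $\{Q_k^s\}$, and by property (i) we have $\{|I_\alpha f|>t\}\cap Q_0\subset \bigcup_j Q_j^t$ modulo a $\mu$-null set.

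The core analytic step is the following trichotomy for a cube $Q_j^t$: either (a) $Q_j^t$ meets $\{(I_\alpha f)^\sharp>3\epsilon t/4\}$, or (b) $Q_j^t$ is contained in some $Q_k^s$, or (c) otherwise. In case (c), I would pick a point $x\in Q_j^t$ with $(I_\alpha f)^\sharp(x)\leq 3\epsilon t/4$ and a doubling cube $Q_k^s$ of the $s$-decomposition whose scale is comparable to $Q_j^t$ and use property (ii) of Lemma \ref{CZ} for both levels together with the definition \eqref{unc-sharp} of the uncentered sharp maximal function to write
\[
\lambda_j^t:=\frac{1}{\mu(Q_j^t)}\int_{Q_j^t}|I_\alpha f|\,d\mu \leq |(I_\alpha f)_{Q_j^t}-(I_\alpha f)_{Q_k^s}|+4^{n+1}s+\tfrac{1}{\mu(Q_j^t)}\int_{Q_j^t}|I_\alpha f-(I_\alpha f)_{Q_j^t}|\,d\mu,
\]
and then bound each term by a constant multiple of $\epsilon t$ using $(I_\alpha f)^\sharp(x)\leq 3\epsilon t/4$ and $s=4^{-n-2}t$; this contradicts $\lambda_j^t>t$ from property (ii), so case (c) is empty. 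Summing over $j$ and using the almost-disjointness provided by Lemma \ref{CZ}(iii) yields the displayed good-$\lambda$ inequality.

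From there, one reads off $d_{I_\alpha f}(t)\leq \mu(\{(I_\alpha f)^\sharp>3\epsilon t/4\})+C\epsilon\,d_{I_\alpha f}(s)$ (working inside $Q_0$, so all measures are those of $\mu\lfloor B_R$ up to a harmless enlargement). Choosing $\epsilon$ small and iterating in the standard fashion allows absorption of the self-referential term. Then I would multiply by $t^{p-1}$ and integrate, or equivalently pass through the Lorentz norm expression $\|g\|_{pd}^{*}=(p\int_0^{\mu(Q_0)}[s^p d_g(s)]^{d/p}\,ds/s)^{1/d}$ from Section \ref{Lorentz}, to obtain
\[
\|I_\alpha f\|_{L^{p\ell}(\mu\lfloor B_R)}\lesssim \|(I_\alpha f)^\sharp\|_{L^{p\ell}(\mu\lfloor cQ_0)}.
\]
Finally, multiplying both sides by $R^{-\beta(1/p-1/\lambda)}$ and taking the supremum over $(x_0,R)$, together with $\mu(cQ_0)\lesssim R^\beta$ from \eqref{non-doubling}, gives the desired Morrey-Lorentz bound; the truncation is removed by monotone convergence at the end.

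The main obstacle is twofold: first, producing the good-$\lambda$ inequality in the non-doubling setting, which forces one to compare averages across two different doubling families $\{Q_j^t\}$ and $\{Q_k^s\}$ rather than over a single dyadic grid as in the classical Fefferman-Stein argument; second, justifying the absorption step, since a priori $\|I_\alpha f\|_{L^{p\ell}(\mu\lfloor B_R)}$ need not be finite. The truncation $|I_\alpha f|\wedge N$ combined with the localization to $Q_0$ circumvents the latter, but the passage to the genuine Lorentz index $\ell$ (as opposed to just $\ell=p$) requires that the good-$\lambda$ inequality be applied at the level of distribution functions, where it is insensitive to the secondary exponent.
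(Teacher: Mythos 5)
Your overall plan coincides with the paper's: obtain the good-$\lambda$ inequality \eqref{keyM} from Tolsa's non-doubling Calder\'on--Zygmund decomposition at levels $t$ and $s=4^{-n-2}t$, convert it to a distribution-function inequality, integrate against $t^{\ell-1}$ over $[0,\mu\lfloor_\Omega(B_R)]$, absorb the self-referential term by choosing $\epsilon$ small, and take the supremum over balls. Your observation that the absorption step needs an a priori finite quantity, handled by truncating $|I_\alpha f|\wedge N$ and passing to the limit, is a legitimate point that the paper's proof leaves implicit. However, your derivation of the good-$\lambda$ inequality itself does not work as written.

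The paper does not classify the level-$t$ cubes $Q_j^t$; it partitions the level-$s$ cubes $Q_k^s$ into $\mathcal{F}_1$ (those contained in $\{(I_\alpha f)^{\sharp}>3\epsilon t/4\}$) and $\mathcal{F}_2$ (the rest). For a fixed $Q=Q_k^s\in\mathcal{F}_2$ it then sums over the level-$t$ cubes lying inside $Q$:
\begin{equation*}
t\sum_{Q_j^t\subset Q}\mu(Q_j^t)<\int_{Q}\bigl|I_\alpha f-(I_\alpha f)_{Q}\bigr|\,d\mu+(I_\alpha f)_{Q}\sum_{Q_j^t\subset Q}\mu(Q_j^t)\leq\tfrac{3\epsilon t}{4}\mu(Q)+\tfrac{t}{4}\sum_{Q_j^t\subset Q}\mu(Q_j^t),
\end{equation*}
and absorbs to get $\sum_{Q_j^t\subset Q}\mu(Q_j^t)\leq\epsilon\mu(Q)$. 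This is a \emph{summed} estimate on the collective mass of the small cubes inside a fixed large cube, not a statement about each individual $Q_j^t$. Your trichotomy, by contrast, tries to classify each $Q_j^t$ and then rule out case (c) by a per-cube contradiction. That contradiction needs a level-$s$ cube $Q_k^s$ ``whose scale is comparable to $Q_j^t$,'' but the Tolsa decompositions at levels $t$ and $s$ are produced independently by Besicovitch stopping and are not a nested geometric grid: a given $Q_j^t$ may be covered by much larger, much smaller, or several overlapping $Q_k^s$'s, so the comparable-scale cube you need to compare averages against need not exist. In addition, in case (a) a cube $Q_j^t$ that merely \emph{meets} (but does not lie inside) $\{(I_\alpha f)^\sharp>3\epsilon t/4\}$ does not contribute its full $\mu$-measure to that level set, so summing over case-(a) cubes does not yield the term $\mu(\{(I_\alpha f)^\sharp>3\epsilon t/4\})$ without further argument. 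Finally, even granting that case (c) is empty, you have not explained how cases (a) and (b) produce the two terms on the right of \eqref{keyM}; in particular the $\epsilon\sum_j\mu(Q_j^s)$ term only appears after the $\mathcal{F}_2$-style absorption, which your sketch skips. To repair the argument, reorganize around the $s$-level cubes as above; the rest of your proposal (distribution-function conversion, the choice $\epsilon^{\ell/p}4^{(n+2)\ell}=1/2$, and the supremum over $x_0\in\operatorname{spt}\mu$ and $R>0$) then carries over.
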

\begin{proof} Let $Q_0\subseteq\mathbb{R}^n$ be a doubling cube. Applying Lemma \ref{CZ} with $I_\alpha f\in L^1_{loc}(\mu)(Q_0)$ and $t=\lambda$ we obtain a family of almost disjoint doubling cubes $\{Q_j^{t}\}$ so that 
	\begin{equation}\label{key_t}
t < \frac{1}{\mu(Q^t_j)}\int_{Q^t_j} \vert I_{\alpha}f\vert d\mu \leq 4^{n+1}t
	\end{equation}
	and $I_{\alpha}f(x)\leq t$ as  $x\notin \bigcup_jQ^t_j$ $\;\mu$-a.e. The main inequality to be proved reads as follows
	\begin{equation}\label{keyM}
\sum_j \mu(Q_j^t)\leq \mu(\{ x\,:\, (I_\alpha f)^{\sharp}(x)>3\epsilon t/4\})+\epsilon \sum_j \mu(Q_j^s)\; \text{ with }\;{s=4^{-n-2}t}  
	\end{equation}
for all $\epsilon>0$. Indeed, let $s=4^{-n-2}t$ and $\mathcal{F}_1$ be the family of doubling cubes $\{Q_j^s\}$ of the $CZ$-decomposition associated to $s$ and satisfying 
	\begin{equation}\label{F_1}
	Q_j^s \subset \left\{ x\in Q_0\,:\, (I_\alpha f)^{\sharp}(x)>\frac{3\epsilon t}{4}\right\}
	\end{equation}
	and let $\mathcal{F}_2$ be the family of doubling cubes such that $Q_j^s \nsubseteq \{ x\in Q_0\,:\, (I_\alpha f)^{\sharp}(x)>3\epsilon t/4\}$. 
If $Q\in\mathcal{F}_2$, obviously one has  $(I_\alpha f)^{\sharp}(x) \leq \frac{3\epsilon t}{4}$ for $x\in Q$ and right-hand side of (\ref{key_t}) implies $(I_\alpha f)_Q=\frac{1}{\mu(Q)}\int_Q \vert I_\alpha f\vert d\mu \leq 4^{n+1}s=t/4$. Now from left-hand side of (\ref{key_t}) one has
\begin{align*}
\sum_{Q_j^t\subset Q}t\mu (Q_j^t)& < \sum_{Q_j^t\subset Q}\, \int_{Q_j^t}\vert I_\alpha f(x)\vert d\mu \\
&\leq  \sum_{Q_j^t\subset Q} \, \int_{Q_j^t}\vert I_\alpha f(x) - (I_\alpha f)_Q\vert d\mu +(I_\alpha f)_Q \sum_{Q_j^t\subset Q}\mu (Q_j^t)\\
&\leq  \int_{Q}\vert I_\alpha f(x) - (I_\alpha f)_Q\vert d\mu +(I_\alpha f)_Q \sum_{Q_j^t\subset Q}\mu (Q_j^t)\\
&\leq \frac{3\epsilon}{4} t\,\mu(Q) + \frac{t}{4}  \sum_{Q_j^t\subset Q}\mu (Q_j^t).
\end{align*}
Hence, summing over all cubes $Q\in \mathcal{F}_2$, we have
\begin{equation}
\sum_{Q\in\mathcal{F}_2}\sum_{Q_j^t\subset Q}\mu (Q_j^t)\leq \epsilon \sum_{Q\in\mathcal{F}_2}\mu (Q).\label{keyF2}
\end{equation}
If $Q\in\mathcal{F}_1$, trivially (\ref{F_1}) gives us
\begin{align}
\sum_{Q\in\mathcal{F}_1}\sum_{Q_j^t\subset Q}\mu (Q_j^t)&\leq \sum_{Q\in\mathcal{F}_1} \mu \left(\left\{ x\in Q_0\,:\, (I_\alpha f)^{\sharp}(x)>3\epsilon t/4\right\}\cap Q\right)\nonumber\\
&\leq \mu \left(\left\{ x\in Q_0\,:\, (I_\alpha f)^{\sharp}(x)>3\epsilon t/4\right\}\right) \label{keyF1}.
\end{align}
Since 
\begin{equation}
	\sum_j \mu(Q_j^t)= \left(\sum_{Q\in\mathcal{F}_1} +\sum_{Q\in\mathcal{F}_2}\right)\sum_{Q_j^t\subset Q}\mu(Q_j^t),\nonumber
\end{equation}
from estimates (\ref{keyF2}) and (\ref{keyF1}) we obtain the good-$\lambda$ inequality \eqref{keyM}. 

Now let $d_{I_\alpha f }(t)=\mu(\{x\in Q_0\,:\, I_{\alpha}f(x)>t\})$ be the distribution function of $I_{\alpha}f$, then by CZ-decomposition we have 
\begin{equation}
d_{I_\alpha f }(t)\leq\rho(t){:=\sum_j \mu(Q_j^t)}\nonumber
\end{equation}
thanks to Lemma \ref{CZ}-(i). Now invoke \eqref{keyM} in order to infer
\begin{align*}
p\int_0^{\infty} t^{\ell-1}\left[\rho(t)\right]^{\frac{\ell}{p}}dt&\lesssim p\int_0^{\infty} t^{\ell-1}\left[d_{(I_{\alpha}f)^{\sharp}}(3\epsilon t/4)\right]^{\frac{\ell}{p}}dt+p\int_0^{\infty} t^{\ell-1}[\epsilon \rho(4^{-n-2} t)]^{\frac{\ell}{p}}dt\\[0.1in]
&=(4/3\epsilon)^{\ell}p\int_0^{\infty}t^{\ell-1} \left[d_{(I_{\alpha}f)^{\sharp}}(t)\right]^{\frac{\ell}{p}}dt+4^{(n+2)\ell}\epsilon^{\frac{\ell}{p}}p\int_0^{\infty}t^{\ell-1}[\rho(t)]^{\frac{\ell}{p}}dt.
\end{align*}
Now choosing $\epsilon>0$ in such a way that $\epsilon^{\frac{\ell}{p}}4^{(n+2)\ell}=1/2$  we obtain 
\begin{equation}
\frac{p}{2}\int_0^{\infty} t^{\ell-1}\left[\rho(t)\right]^{\frac{\ell}{p}}dt\lesssim  p\int_0^{\infty}t^{\ell-1} \left[d_{(I_{\alpha}f)^{\sharp}}(t)\right]^{\frac{\ell}{p}}dt\nonumber.
\end{equation}
Since $d_{I_\alpha f }(t)\leq\rho(t)$ we estimate 
\begin{align*}
	\Vert I_{\alpha}f\Vert_{\mathcal{M}_{p, \ell}^{\lambda}(d\mu)}^\ell&=\sup_{x\,\in \text{spt}(\mu), R>0}R^{-\ell\beta \left(\frac{1}{p}-\frac{1}{\lambda}\right)} \left(p\int_0^{\infty} t^{\ell-1}\left[d_{I_{\alpha}f}(t)\right]^{\frac{\ell}{p}}dt\right)\\
	&\lesssim \sup_{x\,\in \text{spt}(\mu), R>0}R^{-\ell\beta \left(\frac{1}{p}-\frac{1}{\lambda}\right)} \left( p\int_0^{\infty}t^{\ell-1} \left[d_{(I_{\alpha}f)^{\sharp}}(t)\right]^{\frac{\ell}{p}}dt\right)\\
	&=	\big\Vert (I_{\alpha}f)^{\sharp}\big\Vert_{\mathcal{M}_{p,\ell}^{\lambda}(d\mu)}^\ell,
\end{align*}
as required. The case $\ell=\infty$ is achieved without great effort.
\end{proof}

\begin{lemma} \label{pointwise}Let $\mu$ be a Radon measure  such that $\mu(B_r(x))\sim r^{\beta}\,$
for all $x\in\mathbb{R}^n$ and $r>0$. If  $f\in L^1_{loc}(d\nu)$ is such that $I_{\alpha}f\in L^{1}_{loc}(d\mu)$ when $0<\alpha<n$ satisfy $n-\alpha <\beta\leq n$, then 
\begin{equation}
\overline{M}^{\sharp}I_{\alpha}f(x)\lesssim  M_{\alpha}f(x)\nonumber.
\end{equation}
\end{lemma}
\begin{proof}Taking $f'=f\chi_{B(x_0,2r)}$ and $f''=\chi_{\mathbb{R}^n\backslash B(x_0,2r)}$ from Fubini's theorem and   \cite[Lemma 3.1.1]{Adams-Hedberg} we estimate 
	\begin{align*}
	\int_{\vert x-x_0\vert <r}\vert I_{\alpha}f'(x)\vert d\mu(x)
	&\lesssim 	\int_{\vert x-x_0\vert <r}\left(\int_{\vert y-x_0\vert <2r}\vert y- x\vert^{\alpha-n}\vert f(y)\vert d\nu \right)d\mu(x)\\
	&\leq \int_{\vert y-x_0\vert <2r}\left(\int_{\vert y-x\vert <3r}\vert y- x\vert^{\alpha-n}d\mu(x) \right)\vert f(y)\vert d\nu\\
	&\leq \int_{\vert y-x_0\vert <2r}\left[ (n-\alpha)\int_0^{3r}\frac{\mu(B(x,s))}{s^{n-\alpha}}\frac{ds}{s}+\frac{\mu(B(x,3r))}{(3r)^{n-\alpha}}\right]\vert f(y)\vert d\nu\\
	&\lesssim \llbracket\mu\rrbracket_{\beta} r^{\beta} [2r]^{\alpha-n}\int_{\vert y-x_0\vert <2r}\vert f(y)\vert d\nu\\
%	&\lesssim \llbracket\mu\rrbracket_{\beta}\,r^{\beta}\, [2r]^{\alpha -n}\int_{\vert y-x_0\vert<2r}\vert f(y)\vert d\nu\\
%	&\leq  \mu(B(x,3r)) M_{\alpha}f(x_0)\\
		&\lesssim  \llbracket\mu\rrbracket_{\beta}\,\mu(B_r(x_0))\, M_{\alpha}f(x_0),
	\end{align*}
which yields $ \overline{M}^{\sharp}I_{\alpha}f'(x_0)\lesssim   \llbracket\mu\rrbracket_{\beta} M_{\alpha}f(x_0). $
Now from mean value theorem we have 
$$\left \vert \vert x-z\vert^{\alpha-n}-\vert y-z\vert^{\alpha-n}\right \vert\lesssim r\, \vert z-x_0\vert^{\alpha -n-1},$$ 
for $\vert x-x_0\vert<r$ and $\vert y-x_0\vert<r$. Hence, Fubini's theorem implies 
\begin{align}
\left\vert (I_{\alpha}f'')(x) - (I_\alpha f'')_{B_r(x_0)}\right \vert 
%&\leq \frac{1}{\mu (B_r)}\int_{\vert z -x_0\vert >2r}\left\{ \int_{\vert y-x_0\vert <r}\left( \vert x-z\vert^{\alpha-n}-\vert y-z\vert^{\alpha-n}\right)d\mu(y)\right\}\vert f(z)\vert dz\nonumber \\
&\leq \frac{1}{\mu (B_r)}\int_{\vert z -x_0\vert >2r}\left\{ r\int_{\vert y-x_0\vert <r}\vert z-x_0\vert^{\alpha-n-1} d\mu(y)\right\}\vert f(z)\vert d\nu\nonumber \\
&\lesssim r\,\int_{\vert z -x_0\vert >2r}\vert z-x_0\vert^{\alpha-n-1}\vert f(z)\vert d\nu\nonumber \\
&= r\sum_{k=1}^{\infty} \int_{2^{k}r\leq \vert z-x_0\vert < 2^{k+1}r}\vert z-x_0\vert^{\alpha-n-1}\vert f(z)\vert d\nu\nonumber\\
%&\leq \sum_{k=1}^{\infty}2^{-(k+1)} \left(2^{k+1}r\right)^{\alpha-n}\int_{\vert z-x_0\vert < 2^{k+1}r}\vert z-x_0\vert^{\alpha-n-1}\vert f(z)\vert d\nu\nonumber\\
&\leq \sum_{k=1}^{\infty}2^{-(k+1)} M_{\alpha}f(x_0)\lesssim M_{\alpha}f(x_0), \nonumber
\end{align}
which yields %Since $\mu (B_r(x_0))\lesssim r^{\beta}$, then 
\begin{align}
\overline{M}^{\sharp}I_{\alpha}f''(x_0)=\sup_{r>0}\,\frac{1}{\mu(B_r(x_0))}\int_{\vert x-x_0\vert <r} \left\vert (I_{\alpha}f'')(x) - (I_\alpha f'')_{B(x_0,r)}\right \vert d\mu(x)\lesssim  %\llbracket\mu\rrbracket_{\beta} \,
M_{\alpha}f(x_0)\nonumber,
\end{align}
as required.
\end{proof}

\begin{theorem}[Trace-type equivalence]\label{equiv-pot-frac-ML} Let $\mu$ be a Radon measure  such that $\mu(B_r(x))\sim r^{\beta}\,$
for all $x\in\mathbb{R}^n$ and $r>0$. If  $f\in L^1_{loc}(d\nu)$ is such that $I_{\alpha}f\in L^{1}_{loc}(d\mu)$ whenever  $0<\alpha<n$ satisfy  $n-\alpha <\beta\leq n$, then
	\begin{equation}
\Vert M_{\alpha}f\Vert_{\mathcal{M}_{p,\ell}^{\lambda}(d\mu)}\,\sim\, 	\Vert I_{\alpha}f\Vert_{\mathcal{M}_{p,\ell}^{\lambda}(d\mu)}\nonumber,
	\end{equation}
for all $1\leq p\leq\lambda<\infty$ and $1\leq \ell\leq \infty$. 
\end{theorem}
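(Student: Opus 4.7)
\textbf{Plan for Theorem \ref{equiv-pot-frac-ML}.} Both sides of the claimed equivalence depend only on $|f|$, so we may assume $f\ge 0$ throughout. The two estimates $\Vert M_\alpha f\Vert\lesssim \Vert I_\alpha f\Vert$ and $\Vert I_\alpha f\Vert\lesssim \llbracket\mu\rrbracket_\beta\Vert M_\alpha f\Vert$ are handled separately.

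For the direct inequality, since $0<\alpha<n$ we have $r^{\alpha-n}\leq |x-y|^{\alpha-n}$ on $\{|y-x|<r\}$, hence
\[
r^{\alpha-n}\int_{|y-x|<r}f(y)\,d\nu \;\leq\; \int_{\mathbb{R}^n}|x-y|^{\alpha-n}f(y)\,d\nu \;\lesssim\; I_\alpha f(x),
\]
and taking the supremum over $r>0$ yields the pointwise bound $M_\alpha f(x)\lesssim I_\alpha f(x)$, which passes to the Morrey--Lorentz norm by monotonicity of the distribution function.

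For the reverse inequality, the plan is to chain Lemma \ref{pot} and Lemma \ref{pointwise} through the sequence
\[
\Vert I_\alpha f\Vert_{\mathcal{M}_{p\ell}^\lambda(d\mu)} \;\overset{\text{Lem.}\,\ref{pot}}{\lesssim}\; \Vert M^{\sharp}I_\alpha f\Vert_{\mathcal{M}_{p\ell}^\lambda(d\mu)} \;\lesssim\; \Vert \overline{M}^{\sharp}I_\alpha f\Vert_{\mathcal{M}_{p\ell}^\lambda(d\mu)} \;\overset{\text{Lem.}\,\ref{pointwise}}{\lesssim}\; \llbracket\mu\rrbracket_\beta\,\Vert M_\alpha f\Vert_{\mathcal{M}_{p\ell}^\lambda(d\mu)}.
\]
The only genuinely new work is the middle step, which compares the uncentered cube-sharp maximal function $M^{\sharp}$ appearing in Lemma \ref{pot} with the centered ball-sharp maximal function $\overline{M}^{\sharp}$ controlled in Lemma \ref{pointwise}.

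The main obstacle is precisely this comparison, since $\mu$ is not doubling and the ratio $\mu(Q)$ versus $\mu(B(x,c\ell(Q)))$ cannot be bounded uniformly. My plan is to mimic the proof of Lemma \ref{pointwise} at the level of cubes: for $x_0\in Q$, split $f = f\chi_{3Q}+f\chi_{\mathbb{R}^n\setminus 3Q}$; the Fubini--Hedberg estimate $\int_Q |x-y|^{\alpha-n}\,d\mu(x)\lesssim \llbracket\mu\rrbracket_\beta\,\ell(Q)^{\alpha+\beta-n}$ (via \cite[Lemma 3.1.1]{Adams-Hedberg}) controls the local contribution by $\llbracket\mu\rrbracket_\beta\,\ell(Q)^\beta\,\mu(Q)^{-1}\,M_\alpha f(x_0)$, while the distant contribution is handled by a telescoping annular decomposition yielding $M_\alpha f(x_0)$. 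The unwanted factor $\ell(Q)^\beta/\mu(Q)$ is absorbed by restricting the supremum defining $M^{\sharp}$ to the class of $(2,2^{n+1})$-doubling cubes, which is legitimate because the Calder\'on--Zygmund cubes $\{Q_j^t\}$ appearing in the good-$\lambda$ inequality behind Lemma \ref{pot} are drawn from exactly this class (by Lemma \ref{CZ}). On such doubling cubes Tolsa's non-doubling theory provides the required uniform control, and chaining the three inequalities closes the argument.
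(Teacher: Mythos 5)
Your overall architecture coincides exactly with the paper's one-line proof: the direct inequality via the pointwise bound $M_\alpha f \lesssim I_\alpha f$, and the reverse via the chain through Lemma \ref{pot} and Lemma \ref{pointwise}. You have also correctly isolated the one nontrivial gap that the paper glosses over --- the paper simply asserts that $M^\sharp I_\alpha f$ and $\overline{M}^\sharp I_\alpha f$ are ``comparable'' without any argument, and this comparison is not innocuous because $M^\sharp$ normalizes by $\mu(Q)$ while $\overline{M}^\sharp$ normalizes by $r^\beta$.

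The problem is that your proposed fix does not close this gap. Restricting the supremum in $M^\sharp$ to $(2,2^{n+1})$-doubling cubes is permissible in Lemma \ref{pot} (the cubes in $\mathcal{F}_2$ are CZ cubes and hence of this type), but being $(2,2^{n+1})$-doubling only controls the \emph{ratio} $\mu(2Q)/\mu(Q)$; it gives no lower bound on $\mu(Q)$ in terms of $\ell(Q)^\beta$. For example, if $\mu$ is $\beta$-dimensional Hausdorff measure restricted to a surface and $Q$ only clips a small corner of the support, one can easily have $\mu(Q)\ll\ell(Q)^\beta$ even while $Q$ is doubling. Consequently, when you mimic Lemma \ref{pointwise} on cubes, the local part $f'=f\chi_{3Q}$ gives, via the Adams--Hedberg estimate, $\int_Q |I_\alpha f'|\,d\mu \lesssim \llbracket\mu\rrbracket_\beta\,\ell(Q)^\beta\,M_\alpha f(x_0)$, and dividing by $\mu(Q)$ leaves the unbounded factor $\ell(Q)^\beta/\mu(Q)$ intact. (The far part $f''$ is fine: the mean-value estimate gives a uniform pointwise bound, so any normalization works.) Note that replacing the $\mu(Q)$-normalization by $\ell(Q)^\beta$ inside Lemma \ref{pot} does not help either: in the good-$\lambda$ step one then obtains $\sum_{Q_j^t\subset Q}\mu(Q_j^t)\leq\epsilon\,\ell(Q)^\beta$, and since $\ell(Q)^\beta\gtrsim\mu(Q)$ (the growth condition goes the wrong way) this cannot be summed into $\epsilon\sum\mu(Q_j^s)$. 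So the factor $\ell(Q)^\beta/\mu(Q)$ is not absorbed by doubling; some genuinely different idea is needed here, e.g.\ a sharp maximal function normalized by $\mu(\eta Q)$ with $\eta>1$ in the spirit of Tolsa's RBMO theory, together with the corresponding non-doubling John--Nirenberg machinery, rather than the raw $M^\sharp$ of \eqref{unc-sharp}.
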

\begin{proof}Note that Lemma \ref{pot} is true with centered sharp maximal function $\overline{M}^{\sharp}I_{\alpha}f$. Since $M_\alpha f(x)\lesssim I_{\alpha}f(x)$, by  Lemma \ref{pot} and   \ref{pointwise} we obtain
\begin{equation}
\Vert M_\alpha f\Vert_{\mathcal{M}_{p,\ell}^{\lambda}}\lesssim \Vert I_\alpha f\Vert_{\mathcal{M}_{p,\ell}^{\lambda}} \stackrel{\text{Lemma } \ref{pot}}{\lesssim} \left\Vert\overline{M}^{\sharp}I_{\alpha}f\right\Vert_{\mathcal{M}_{p,\ell}^{\lambda}}\stackrel{\text{Lemma } \ref{pointwise}}{\lesssim} \Vert M_\alpha f\Vert_{\mathcal{M}_{p,\ell}^{\lambda}},
\end{equation}
which is the desired result.
\end{proof}

\section{Proof of trace Theorem \ref{measure-restriction-thm}}\label{thm1}
Let us recall the pointwise estimate between Riesz potential and fractional maximal operator.

\begin{lemma}\label{lemma-point} Let $f\in L^1_{loc}(\mathbb{R}^n,d\nu)$ and $B(x,r)\subset \mathbb{R}^n$ a ball with radius $r>0$.
	
\begin{itemize}
	\item[(i)] If $0\leq \gamma<\delta<\alpha\leq n$, then 
		\begin{equation}
	\left\vert I_{\delta}f(x)\right\vert \lesssim \left[M_{\alpha}f(x)\right]^{\frac{\delta-\gamma}{\alpha-\gamma}}   \left[M_{\gamma}f(x)\right]^{1-\frac{\delta-\gamma}{\alpha-\gamma}},\;\; \forall\, x\in\mathbb{R}^n \nonumber.
	\end{equation}
	\item[(ii)] If $1\leq p<\infty$ and $1\leq k\leq\infty$, then 
	\begin{equation}
	[{\nu(B(x,r))}]^{\frac{1}{p}-1}\int_{B(x,r)}\vert f(y)\vert d\nu \lesssim \Vert f\Vert_{L^{p,k}(B(x,r))}.\nonumber
	\end{equation}	
\end{itemize}	
	In particular,  $(M_{n/\lambda}f)(x)\lesssim \Vert f\Vert_{\mathcal{M}_{p,k}^{\lambda}(d\nu)}$, for all $p\leq\lambda<\infty$.
\end{lemma}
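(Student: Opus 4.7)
\textbf{Proof proposal for Lemma \ref{lemma-point}.} The plan for part (i) is the classical two-region splitting of the Riesz potential, balanced against two different fractional maximal bounds. Fix $x\in\mathbb{R}^n$ and a radius $r>0$ to be chosen later. I would write
\[
|I_\delta f(x)| \lesssim \int_{|y-x|<r}|x-y|^{\delta-n}|f(y)|\,d\nu + \int_{|y-x|\geq r}|x-y|^{\delta-n}|f(y)|\,d\nu,
\]
and dyadically decompose each region into annuli $A_k=\{2^{-k-1}r\leq|y-x|<2^{-k}r\}$ or $\{2^{k}r\leq|y-x|<2^{k+1}r\}$. On the inner annuli, bounding $|x-y|^{\delta-n}$ by the largest value $(2^{-k-1}r)^{\delta-n}$ and the ball integral of $|f|$ by $(2^{-k}r)^{n-\gamma}M_\gamma f(x)$ produces a geometric series in $2^{-k(\delta-\gamma)}$, which converges since $\delta>\gamma$ and sums to a constant multiple of $r^{\delta-\gamma}M_\gamma f(x)$. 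On the outer annuli, a parallel estimate using $M_\alpha$ gives a geometric series in $2^{k(\delta-\alpha)}$, convergent since $\delta<\alpha$, summing to a constant times $r^{\delta-\alpha}M_\alpha f(x)$.

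This yields the two-term bound
\[
|I_\delta f(x)| \lesssim r^{\delta-\gamma}M_\gamma f(x) + r^{\delta-\alpha}M_\alpha f(x).
\]
Optimizing in $r$ by equating the two terms gives $r^{\alpha-\gamma}=M_\alpha f(x)/M_\gamma f(x)$, and substituting this choice yields exactly the geometric-mean estimate of (i). If $M_\gamma f(x)=0$ the inequality is trivial, and if $M_\alpha f(x)=\infty$ both sides are infinite, so the optimization is legitimate. No obstacle is expected here; the only care is keeping track of the exponents in the dyadic sums and noting that convergence of both series is exactly the content of the hypothesis $\gamma<\delta<\alpha$.

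For part (ii), the plan is to invoke H\"older's inequality in Lorentz spaces applied to the factorization $|f|\,\chi_{B(x,r)}$. With conjugate exponents $p'$ and $k'$, one has
\[
\int_{B(x,r)}|f|\,d\nu = \int |f\chi_{B(x,r)}|\cdot \chi_{B(x,r)}\,d\nu \lesssim \|f\|_{L^{pk}(B(x,r))}\,\|\chi_{B(x,r)}\|_{L^{p'k'}(d\nu)},
\]
and since $\|\chi_{B(x,r)}\|_{L^{p'k'}}=\nu(B(x,r))^{1/p'}=\nu(B(x,r))^{1-1/p}$, dividing by this factor gives the stated estimate. The main (minor) technical point is to cite the Lorentz-space H\"older inequality from Section~\ref{Lorentz} in the right form.

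Finally, the ``in particular'' statement follows by specializing $\nu$ to Lebesgue measure, for which $\nu(B(x,r))\sim r^n$. Inserting (ii) into the definition of $M_{n/\lambda}f(x)$ converts the integral into an $L^{pk}$-norm on $B(x,r)$ with a prefactor of $r^{n/\lambda-n/p}$. The Morrey--Lorentz norm definition \eqref{morrey2} with $\beta=n$ contributes a factor $r^{n(1/p-1/\lambda)}$, and the two powers of $r$ cancel exactly, so the supremum in $r$ is bounded by $\|f\|_{\mathcal{M}_{pk}^\lambda(d\nu)}$, as required.
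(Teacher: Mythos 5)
Your proposal is correct and reaches both conclusions, but it is worth recording how it relates to the paper's own argument. For part (i), the paper does not prove the inequality at all --- it simply cites \cite[Lemma 4.1]{Liu}. Your self-contained argument, splitting $I_\delta f(x)$ at radius $r$, controlling the inner dyadic annuli by $M_\gamma f(x)$ (a geometric series convergent since $\delta>\gamma$) and the outer ones by $M_\alpha f(x)$ (convergent since $\delta<\alpha$), and then optimizing $r^{\delta-\gamma}M_\gamma f(x)+r^{\delta-\alpha}M_\alpha f(x)$ at $r^{\alpha-\gamma}=M_\alpha f(x)/M_\gamma f(x)$, is the standard Hedberg-type proof and is complete and correct, including your handling of the degenerate cases $M_\gamma f(x)=0$ or $M_\alpha f(x)=\infty$. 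For part (ii), your route through the Lorentz--H\"older (O'Neil) inequality and the identity $\Vert\chi_{B(x,r)}\Vert_{L^{p'k'}}\sim \nu(B(x,r))^{1/p'}$ is mathematically the same as the paper's: the paper carries out precisely this computation by hand, first using the Hardy--Littlewood rearrangement inequality to reduce $\int_{B(x,r)}\vert f\vert\,d\nu$ to $\int_0^{\nu(B(x,r))} f^\ast(t)\,dt$, and then applying ordinary H\"older in $L^k(dt/t)$ after the split $1=t^{1-1/p}\cdot t^{1/p-1}$. The one genuine defect in your write-up is the reference: Section~\ref{Lorentz} does not state any H\"older inequality for Lorentz spaces (it contains only the definitions, the embedding \eqref{inclusion-1}, the norm equivalence, and Hunt's interpolation theorem), so you would need to either cite it from an external source or reproduce the short rearrangement-plus-H\"older computation that the paper uses. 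Both your argument and the paper's tacitly need $p>1$ (or $k\le 1$) at the endpoint; since the lemma is only invoked with $1<p<q$, this is harmless. Your derivation of the ``in particular'' clause, inserting (ii) into the definition of $M_{n/\lambda}$ with $\nu(B(x,r))\sim r^n$ and matching the Morrey--Lorentz weight $r^{n(1/p-1/\lambda)}$, is exactly the intended one.
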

\begin{proof} The item $(i)$ was obtained in \cite[Lemma 4.1]{Liu}. To show $(ii)$, first let us recall the Hardy-Littlewood inequality
	\begin{align*}
	\int_{B(x,r)}\vert f(y)g(y)\vert d \nu\leq \int_0^{\nu(B(x,r))}f^{\ast}(t)g^{\ast}(t)dt.
	\end{align*}
This inequality and H\"older's inequality in $L^{k}(\mathbb{R},dt/t)$ give us
\begin{align*}
\int_{B(x,r)}\vert f(x)\vert d\nu &\leq \int_0^{\nu(B(x,r))}t^{1-\frac{1}{p}}\left(t^{\frac{1}{p}}f^{\ast}(t)\right)\frac{dt}{t}\\
&\leq \left(\int_0^{\nu(B(x,r))}\left(t^{1-\frac{1}{p}}\right)^{k'}\frac{dt}{t}\right)^\frac{1}{k'}\left(\int_0^{\nu(B(x,r))}\left(t^{\frac{1}{p}}f^{\ast}(t)\right)^{k}\frac{dt}{t}\right)^\frac{1}{k}\nonumber\\
&\lesssim [\nu(B(x,r))]^{1-\frac{1}{p}}\Vert f\Vert_{L^{p,k}(B(x,r))},
\end{align*}
as desired.
\end{proof}
Now, we are in position to prove Theorem \ref{measure-restriction-thm}. 
\subsection{The condition $\llbracket\mu\rrbracket_{\beta}<\infty$ is sufficient}
 For $x\in B_\rho=B(x_0,\rho)$ with $\rho>0$, let us write  
	\begin{align*}
I_{\delta}f(x)&= \int_{|y-x|<\rho}\vert x-y\vert^{\delta-n} f(y)d\nu(y)+\int_{|y-x|\geq \rho}\vert x-y\vert^{\delta-n} f(y)d\nu(y):=I_{\delta}f'(x)+I_{\delta}f''(x), \nonumber
\end{align*}
where $f'=\chi_{B(x_0,2\rho)}f$ and $f{''}=f-f'$.  If $y\in \mathbb{R}^n\backslash B(x_0,2\rho)$, using integration by parts and Lemma \ref{lemma-point}-(ii), respectively, we have 
\begin{align}
\left\vert I_{\delta}f{''}(x)\right\vert&\leq \int_{2\rho}^{\infty} s^{\delta-n}\left(\int_{B(x,s)}\vert f(y)\vert d\nu \right)\frac{ds}{s}\nonumber\\[0.02in]
&\lesssim \int_{\rho}^{\infty} s^{\delta-n}[\nu(B(x,s))]^{1-\frac{1}{p}}\Vert f\Vert_{L^{p,\ell}(B(x,s))}\frac{ds}{s}\nonumber\\[0.02in]
&\leq \left(\int_{\rho}^{\infty} s^{\delta-1-\frac{n}{\lambda}}ds\right) \Vert f\Vert_{\mathcal{M}_{p,\ell}^{\lambda}(d\nu)}\nonumber\\[0.02in]
&\lesssim \rho^{\delta-\frac{n}{\lambda}}\Vert f\Vert_{\mathcal{M}_{p,\ell}^{\lambda}(d\nu)},\nonumber
\end{align}
in view of $0<\delta<{n}/{\lambda}$. Therefore, $(I_{\delta}f{''})^{\ast}(t)\lesssim   \rho^{\delta-\frac{n}{\lambda}}\Vert f\Vert_{\mathcal{M}_{p,\ell}^{\lambda}(d\nu)} $ and we can estimate
\begin{align}
\Vert I_{\delta}f{''}\Vert_{L^{q, s}(B(x_0,\rho),d\mu)}&\lesssim \rho^{\delta-\frac{n}{\lambda}}\Vert f\Vert_{\mathcal{M}_{p,\ell}^{\lambda}(\mathbb{R}^n,d\nu)}\left(\int_0^{\mu(B(x_0,\rho))}t^{\frac{s}{q}-1}dt\right)^{\frac{1}{s}}\nonumber\\[0.01in]
&\leq \rho^{\delta-\frac{n}{\lambda}} \mu(B(x_0,\rho))^{\frac{1}{q}}\Vert f\Vert_{\mathcal{M}_{p,\ell}^{\lambda}(d\nu)}\nonumber\\[0.01in]
&\leq \rho^{\frac{\beta}{q}-\frac{\beta}{\lambda_\ast}} \,\llbracket\mu\rrbracket_{\beta}^{\frac{1}{q}}\,\Vert f\Vert_{\mathcal{M}_{p,\ell}^{\lambda}(d\nu)},\label{fora-da-bola}
\end{align}
thanks to $\delta=\frac{n}{\lambda}-\frac{\beta}{\lambda_\ast}$ and $\mu(B(x_0,r))\leq \llbracket\mu\rrbracket_{\beta} \,r^{\beta}$, for all $\,x_0\in\text{spt}(\mu)\,$ and $\,r>0$. Since 
\begin{equation}
	\frac{n-\beta}{p}<\delta <\frac{n}{\lambda}\nonumber
\end{equation}
we can ensure the existence of  $\gamma$ such that ${(n-\beta)}/{p}<\gamma<\delta <{n}/{\lambda}$ which yields $n-\beta<\gamma p <np/\lambda \leq n$. Hence, for $y\in B(x_0,2\rho)$ we invoke Lemma \ref{lemma-point} with $\alpha=n/\lambda$ and estimate
\begin{align}
	\Vert I_{\delta}f{'}\Vert_{L^{q, s}(B(x_0,\rho),d\mu)}%&\lesssim \left\Vert \left[M_{\alpha}f(x)\right]^{\frac{\delta-\gamma}{\alpha-\gamma}}   \left[M_{\gamma}f(x)\right]^{1-\frac{\delta-\gamma}{\alpha-\gamma}}\right\Vert_{L^{qs}(B(x_0,\rho),d\mu)}\\
	&\lesssim 
	\Vert f\Vert_{\mathcal{M}_{p,\ell}^{\lambda}(d\nu)}^{\frac{\delta-\gamma}{\alpha-\gamma}} \left\Vert \,\vert M_{\gamma}f{'}\vert ^{(1-\frac{\delta-\gamma}{\alpha-\gamma})}\right\Vert_{L^{q,s}(B(x_0,\rho),d\mu)}\nonumber\\[0.02in]
	&=\Vert f\Vert_{\mathcal{M}_{p,\ell}^{\lambda}(d\nu)}^{\frac{\delta-\gamma}{\alpha-\gamma}} \left\Vert M_{\gamma}f{'}\right\Vert_{L^{q(1-\frac{\delta-\gamma}{\alpha-\gamma}),\,s(1-\frac{\delta-\gamma}{\alpha-\gamma})}(B(x_0,\rho),d\mu)}^{1-\frac{\delta-\gamma}{\alpha-\gamma}},\nonumber
	%&\lesssim\Vert f\Vert_{\mathcal{M}_{p,l}^{\lambda}(d\nu)}^{1-b} \left\Vert M_{\gamma}f{'}\right\Vert_{L^{qb,l}(B(x_0,\rho),d\mu)}^{b},\label{aux1-ineq}
\end{align}
in view of  $\;\Vert \,\vert g\vert^b\Vert_{L^{q,s}}=\Vert g \Vert_{L^{qb,sb}}^b\;$ for $b=1-\frac{\delta-\gamma}{\alpha-\gamma}$. Now, since $\ell<s$ and $b=(1-\frac{\delta-\gamma}{\alpha-\gamma})\in (0,1)$ we can choose  $\gamma$ close to $\delta$ such that $\ell\leq sb$. It follows from Calder\'on's Lemma  \ref{inclusion-1} that 
\begin{align}
\Vert I_{\delta}f{'}\Vert_{L^{q,s}(B(x_0,\rho),d\mu)}\lesssim\Vert f\Vert_{\mathcal{M}_{p,\ell}^{\lambda}(d\nu)}^{1-b} \left\Vert M_{\gamma}f{'}\right\Vert_{L^{qb,\,\ell}(B(x_0,\rho),d\mu)}^{b}.\label{aux1-ineq}
\end{align}
Now from real interpolation (see Lemma \ref{Hunt-thm}) and  trace principle \cite[Theorem 2]{Adams0} in $L^p(d\mu)$ we will show that 
\begin{equation}\label{dor-de-cabeca} 
	\Vert M_{\gamma}f'\Vert_{L^{\overline{p},\ell}(B_\rho, d\mu)}\lesssim \llbracket\mu\rrbracket_{\beta}^{{1}/{\overline{p}}}\Vert f'\Vert_{L^{p,\ell}(\mathbb{R}^n, d\nu)}
\end{equation}
for all $f'\in L^{p,\ell}(d\nu)$ whenever $1<p<\overline{p}=qb={\beta p}/{(n-\gamma p)}$, $0<\beta\leq n$ and $n-\beta<\gamma p<n$. Indeed, let $\theta\in (0,1)$, $p_0<p<p_1$ and $\bar{p}_0<\bar{p}<\bar{p}_1$ be such that 
\begin{equation*}
\frac{1}{p}=\frac{1-\theta}{p_0}+\frac{\theta}{p_1}\quad \text{ and }\quad \frac{1}{\bar{p}}=\frac{1-\theta}{\bar{p}_0}+\frac{\theta}{\bar{p}_1},
\end{equation*}
where $1<p_i< \bar{p}_i=\frac{\beta p_i}{n-\gamma p_i}$, $0<\beta\leq n$ and  $n-\beta<\gamma p_i<n$, $i=0,1$. Hence, from pointwise inequality $M_\gamma f'(x)\lesssim I_{\gamma}\vert f'(x)\vert$ and \cite[Theorem 2]{Adams0}  we have
\begin{equation}
\Vert M_{\gamma}f'\Vert_{L^{\bar{p}_i, \bar{p}_i}(B_\rho, d\mu)}\lesssim \Vert I_{\gamma}f'\Vert_{L^{\bar{p}_i, \bar{p}_i}(B_\rho, d\mu)}\leq \llbracket\mu\rrbracket_{\beta}^{{1}/{\bar{p}_i}}\Vert f'\Vert_{L^{p_i, p_i}(\mathbb{R}^n, d\nu)},\; i=0,1\nonumber
\end{equation}
 provided that the Radon measure $\mu$ satisfies $\llbracket\mu\rrbracket_{\beta}<\infty$. Therefore, thanks to Hunt's Theorem (see Lemma \ref{Hunt-thm}) 
\begin{equation}
\Vert M_{\gamma}f'\Vert_{L^{\overline{p},\ell}(B_\rho, d\mu)}\lesssim  \llbracket\mu\rrbracket_{\beta}^{{(1-\theta)}/{\overline{p}_0}}\llbracket\mu\rrbracket_{\beta}^{{\theta}/{\overline{p}_1}}\Vert f'\Vert_{L^{p,\ell}(\mathbb{R}^n, d\nu)}= \llbracket\mu\rrbracket_{\beta}^{{1}/{\overline{p}}}\Vert f'\Vert_{L^{p,\ell}(d\nu)} \;\text{as }\;1\leq \ell\leq \infty,\nonumber
\end{equation}
where $1<p<\overline{p}={\beta p}/{(n-\gamma p)}$, $0<\beta\leq n$ and $n-\beta<\gamma p<n$, as required. 
%In other words, from \eqref{a-luz2} and condition $q/\lambda_\ast\leq p/\lambda$ and $0<\beta\leq n$ we are able to take  $(n-\beta)<\gamma p<n$ which yields  $p<\overline{p}$. 
Hence, we can inserting \eqref{dor-de-cabeca} into (\ref{aux1-ineq}) to yield 
\begin{align}
\Vert I_{\delta}f{'}\Vert_{L^{q, s}(B(x_0,\rho),d\mu)}&\lesssim\Vert f\Vert_{\mathcal{M}_{p,\ell}^{\lambda}(\mathbb{R}^n,\,d\nu)}^{1-b}\, \llbracket\mu\rrbracket_{\beta}^{b/\overline{p}}\, \left\Vert f'\right\Vert_{L^{p,\ell}(\mathbb{R}^n,d\nu)}^{b}\nonumber\\[0.02in]
&=\Vert f\Vert_{\mathcal{M}_{p,\ell}^{\lambda}(\mathbb{R}^n,\,d\nu)}^{1-b}\,\llbracket\mu\rrbracket_{\beta}^{b/\overline{p}}\, \left\Vert f\right\Vert_{L^{p,\ell}(B(x_0,2\rho),d\nu)}^{b}\nonumber\\[0.02in]
&\lesssim\llbracket\mu\rrbracket_{\beta}^{b/\overline{p}}\,\rho^{\left(\frac{n}{p}-\frac{n}{\lambda}\right)(1-\frac{\delta-\gamma}{\alpha-\gamma})} \Vert f\Vert_{\mathcal{M}_{p,\ell}^{\lambda}(\mathbb{R}^n,\,d\nu)}\nonumber\\[0.02in]
&=\llbracket\mu\rrbracket_{\beta}^{\frac{1}{q}}\,\rho^{\beta\left(\frac{1}{q}-\frac{1}{\lambda_{\ast}}\right)} \Vert f\Vert_{\mathcal{M}_{p,\ell}^{\lambda}(\mathbb{R}^n,\,d\nu)},\label{na-bola}
\end{align}
where the equality \eqref{na-bola} is a consequence of (\ref{a-luz1}) and \eqref{a-luz2} below. Indeed, note that by  $\alpha=n/\lambda$ and $\delta=n/\lambda -\beta/\lambda_{\ast}$, the request 
\begin{align}
	qb=q\left(1-\frac{\delta-\gamma}{\alpha-\gamma}\right)%=q\frac{\alpha-\delta}{\alpha-\gamma}
	=\overline{p}=\frac{\beta p}{n-\gamma p}\label{a-luz1}
\end{align}
is equivalent to 
\begin{align}\label{a-luz2}
	\gamma p\beta\left(1-\frac{q}{\lambda_\ast}\right)=n\beta \left(1-\frac{q}{\lambda_{\ast}}\right)-\beta n\left(1-\frac{p}{\lambda}\right).
\end{align}
Hence, we obtain 
\begin{align*}
{\left(\frac{n}{p}-\frac{n}{\lambda}\right)\left(1-\frac{\delta-\gamma}{\alpha-\gamma}\right)}&\stackrel{\eqref{a-luz1}}{=}\frac{\overline{p}}{q} \left(\frac{n}{p}-\frac{n}{\lambda}\right) \\[0.02in]
&=\frac{\overline{p}}{q}\frac{1}{p\beta}\left[n\beta\left(1-\frac{p}{\lambda}\right)\right]\\[0.02in]
&\stackrel{\eqref{a-luz2}}{=}\frac{1}{q}\frac{1}{n-\gamma  p}\,\left[ n\beta \left(1-\frac{q}{\lambda_{\ast}}\right)-\gamma p\beta\left(1-\frac{q}{\lambda_\ast}\right)\right]\\[0.02in]
&=\beta\left(\frac{1}{q}-\frac{1}{\lambda_{\ast}}\right).
\end{align*}
Note that $(n-\beta)/p<\gamma<\delta<\alpha$ implies $0<b<1$. From estimates \eqref{fora-da-bola} and \eqref{na-bola} we obtain 
\begin{equation}
\rho^{-\beta\left(\frac{1}{q}-\frac{1}{\lambda_{\ast}}\right)} \Vert I_{\delta}f\Vert_{L^{q, s}(\mu\lfloor_{\Omega}(B_\rho))}\lesssim \llbracket\mu\rrbracket_{\beta}^{\frac{1}{q}}\,\Vert f\Vert_{\mathcal{M}_{p, \ell}^{\lambda}(d\nu)},\nonumber\\[0.02in]
\end{equation}
which is the desired continuity of the map $I_\delta:\mathcal{M}_{p,\ell}^{\lambda}(d\nu)\rightarrow\mathcal{M}_{q,s}^{\lambda_{\ast}}(\Omega,d\mu)$.\fin

\subsection{The condition $\llbracket\mu\rrbracket_{\beta}<\infty$ is necessary}\label{only if part}

Let $B(x_0,r)\subset \mathbb{R}^n$ be a ball centered in $x_0$ and with radius $r>0$. Choosing  $f=\chi_{B(x_0,r)}$ when $x\in B(x_0,r)$ we can estimate
\begin{equation*}
(I_{\delta}f)(x)= \int_{\mathbb{R}^n}\vert x-y\vert^{\delta-n}\chi_{B(x_0,r)}(y)d\nu(y)=\int_{\vert y -x_0\vert<r}\vert x-y\vert^{\delta-n}d\nu(y)\gtrsim r^{\delta-n} \nu(B(x_0,r))=Cr^{\delta}
\end{equation*}
thanks to $\vert x-y\vert\leq 2r$ for $y\in B(x_0,r)$. The previous argument implies that the estimate $(I_{\delta}f)^{\ast}(t)\gtrsim r^{\delta}$ hold for $0<t<\mu(B(x_0,r))$. Hence,  using \eqref{almost_norm1}  (see also \eqref{almost_norm}) we obtain  
\begin{align}\label{key_pointwise}
\Vert I_\delta f\Vert_{L^{q, s}(B(x_0,r),d\mu)}\gtrsim r^{\delta}\left(\int_0^{\mu(B(x_0,r))}t^{\frac{s}{q}-1}ds\right)^{\frac{1}{s}}=C\,r^{\frac{n}{\lambda}-\frac{\beta}{\lambda_{\ast}}} [\mu(B(x_0,r)]^{\frac{1}{q}}.
\end{align}
Since  $I_\delta:\mathcal{M}_{p,\ell}^{\lambda}(d\nu)\rightarrow\mathcal{M}_{q, s}^{\lambda_{\ast}}(d\mu)$ is bounded and $\Vert \chi_{B(x_0,r)}\Vert_{\mathcal{M}_{p,\ell}^{\lambda}(\mathbb{R}^n)}=Cr^{n/\lambda}$, then \eqref{key_pointwise} implies that 
\begin{equation*}
r^{\frac{n}{\lambda}}\gtrsim \Vert I_{\delta}f\Vert_{\mathcal{M}_{q, s}^{\lambda_{\ast}}(d\mu)}\gtrsim  r^{\beta\left(\frac{1}{\lambda_{\ast}}-\frac{1}{q}\right)} \Vert I_{\delta} f\Vert_{L^{q, s}(B(x_0,r),d\mu)}\gtrsim r^{\frac{n}{\lambda}-\frac{\beta}{q}} \mu(B(x_0,r))^{\frac{1}{q}}
\end{equation*}
which yields $\mu(B(x_0,r))\lesssim r^{\beta}$ as desired. \fin

\section{Proof of Corollary \ref{Trace}}\label{extension}
The Calder\'on-Stein's extension operator $\mathcal{E}$ on Lipschitz domain $\Omega$ is defined by  $\mathcal{E}f=f$ in $\overline{\Omega}$ and 
\begin{align}
\mathcal{E}f(x)= \int_{1}^{\infty}f(x^{\prime},x_n+s\delta^{\ast}(x))\psi(s)ds  &\text{ on }\; \mathbb{R}^n\backslash \overline{\Omega}\nonumber
\label{extension0}
\end{align}
where $\psi$ is a continuous function on $[1,\infty)$ such that $\psi(s)=O(s^{-N})$ as $s\rightarrow \infty$ for every  $N$, 
\begin{equation}
\int_{1}^{\infty}\psi(s)ds=1\quad \text{ and }\quad \int_{1}^{\infty}s^{k}\psi
(s)ds=0,\text{ for }k=1,2,\cdots\nonumber
\end{equation} 
and $\delta^{\ast}(x)=2c\Delta(x)$ is a $C^{\infty}-$ function comparable to   $\delta(x)=\text{dist}(x,\overline{\Omega})$, see \cite[Theorem 2]{Stein}. On half-space  $\mathbb{R}^n_+$ one has  $\delta^{\ast}(x)=2x_n$ and we have
\begin{equation}
\mathcal{E}f(x^{\prime},x_{n})=\int_{1}^{\infty}f(x^{\prime},(1-2s)x_{n})\psi(s)ds\;\;  \text{ if } \;\;
x_{n}<0
\label{extension1}
\end{equation}
provided that the above integral converges. The proof of  the Lemma \ref{lem-ext1} below is similar to   \cite[Lemma 3.1]{MP1}, we include the proof for reader convenience.

\begin{lemma}\label{lem-ext1} Let $n\geq 2$ and $f\in L^1_{loc}(\mathbb{R}_+^n)$ such that $\nabla f\in \mathcal{M}_{p, d}^{\lambda}(\mathbb{R}_{+}^{n})$ then 
$$
	\Vert\nabla \mathcal{E}f\Vert_{\mathcal{M}^{\lambda}_{p, d}(\mathbb{R}^{n})}\leq C\Vert\nabla 
	f\Vert_{\mathcal{M}^{\lambda}_{p, d}(\mathbb{R}_{+}^{n})}\,
$$
for $1\leq p\leq\lambda<\infty$ and  $d\in[1,\infty]$. 
\end{lemma}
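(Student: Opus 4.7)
\emph{Proof proposal.} The plan is to exploit the explicit form \eqref{extension1} of $\mathcal{E}f$ on $\mathbb{R}^n_-$ in order to pointwise dominate $|\nabla\mathcal{E}f|$ by a weighted average of translates of $|\nabla f|$, and then to reduce the Morrey--Lorentz norm of $\nabla\mathcal{E}f$ over a ball $B_R(x_0)\subset\mathbb{R}^n$ to a superposition of Morrey--Lorentz norms of $\nabla f$ over balls in $\mathbb{R}^n_+$. First I would differentiate under the integral sign in \eqref{extension1}: for $x_n<0$ one gets $\partial_{x_i}\mathcal{E}f(x',x_n)=\int_1^\infty(\partial_{x_i}f)(T_s x)\psi(s)\,ds$ for $i<n$ and $\partial_{x_n}\mathcal{E}f(x',x_n)=\int_1^\infty (1-2s)(\partial_{x_n}f)(T_s x)\psi(s)\,ds$, where $T_s(x',x_n):=(x',(1-2s)x_n)$ sends $\mathbb{R}^n_-$ into $\mathbb{R}^n_+$. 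This produces the pointwise bound
$$|\nabla\mathcal{E}f(x)|\leq \int_1^\infty(1+2s)\,|\psi(s)|\,|\nabla f|(T_s x)\,ds,\qquad x_n<0.$$

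Next, given an arbitrary ball $B=B_R(x_0)\subset\mathbb{R}^n$, I would split $B=B^+\cup B^-$ with $B^\pm=B\cap\mathbb{R}^n_\pm$. On $B^+$ one has $\nabla\mathcal{E}f=\nabla f$, so $\|\nabla\mathcal{E}f\|_{L^{pd}(B^+)}\leq R^{n(1/p-1/\lambda)}\|\nabla f\|_{\mathcal{M}^\lambda_{pd}(\mathbb{R}^n_+)}$ directly from the definition \eqref{morrey2} with $\beta=n$. For the contribution from $B^-$ I would apply Minkowski's integral inequality in $L^{pd}$ (valid for $1<p<\infty$ and $1\leq d\leq\infty$ through the genuine norm $\|\cdot\|_{pd}^\natural$; the remaining endpoint $p=d=1$ is just Fubini) to pull the superposition outside the norm:
$$\|\nabla\mathcal{E}f\|_{L^{pd}(B^-)}\leq \int_1^\infty (1+2s)\,|\psi(s)|\,\|(\nabla f)\circ T_s\|_{L^{pd}(B^-)}\,ds.$$

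The key computational step is a change of variables $y=T_s(x)$, whose Jacobian equals $(2s-1)$. Tracking the effect on the distribution function one gets $d_{(\nabla f)\circ T_s}(\sigma;B^-)=(2s-1)^{-1}d_{\nabla f}(\sigma;T_s(B^-))$, hence $\|(\nabla f)\circ T_s\|_{L^{pd}(B^-)}=(2s-1)^{-1/p}\|\nabla f\|_{L^{pd}(T_s(B^-))}$. Since $T_s$ stretches the $x_n$-axis by the factor $(2s-1)$ while fixing the other $n-1$ directions, $T_s(B^-)$ is contained in an ellipsoid of semi-axes $R,\dots,R,(2s-1)R$, and therefore in a ball $\widetilde{B}_s\subset\overline{\mathbb{R}^n_+}$ of radius $\leq C_n s R$ (a small case analysis on the sign of $x_{0,n}$ ensures the center can be chosen in $\overline{\mathbb{R}^n_+}$). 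Thus $\|\nabla f\|_{L^{pd}(T_s(B^-))}\leq (C_nsR)^{n(1/p-1/\lambda)}\|\nabla f\|_{\mathcal{M}^\lambda_{pd}(\mathbb{R}^n_+)}$, and collecting all constants yields
$$R^{-n(1/p-1/\lambda)}\|\nabla\mathcal{E}f\|_{L^{pd}(B)}\leq C\Bigl(1+\int_1^\infty (1+s)\,s^{n(1/p-1/\lambda)-1/p}|\psi(s)|\,ds\Bigr)\|\nabla f\|_{\mathcal{M}^\lambda_{pd}(\mathbb{R}^n_+)}.$$
The last integral converges since $\psi$ decays faster than any polynomial. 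Taking the supremum over $x_0\in\mathbb{R}^n$ and $R>0$ yields the desired inequality.

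\textbf{Main obstacle.} The delicate points are (a) making the geometric containment $T_s(B^-)\subset\widetilde{B}_s$ uniform in the position of $x_0$ relative to the hyperplane $\partial\mathbb{R}^n_+$ (in particular when $x_{0,n}\ll 0$, so that $|x_{0,n}|\gg R$), and (b) justifying Minkowski's integral inequality together with the change-of-variables identity on Lorentz quasi-norms uniformly across the whole range $1\leq p\leq\lambda<\infty$, $1\leq d\leq\infty$, especially where $\|\cdot\|_{pd}^\ast$ is only a quasi-norm; the remedy is to work with the equivalent genuine norm $\|\cdot\|_{pd}^\natural$ (for $p>1$) and to treat the endpoint $p=1$ separately via Fubini on distribution functions.
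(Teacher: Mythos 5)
Your proof is correct and, at the level of strategy, parallels the paper's own argument: both differentiate the explicit extension \eqref{extension1}, apply Minkowski's integral inequality in $L^{pd}$ to commute the norm with $\int_1^\infty(\cdot)\,\psi(s)\,ds$, reduce to a scaling/change-of-variables estimate for the anisotropic dilation $T_s\colon (x',x_n)\mapsto (x',(1-2s)x_n)$, and then invoke the superpolynomial decay of $\psi$. Where you differ is in how the dilation is quantified. The paper quotes a compact identity $\Vert D^\alpha f(\cdot,(2s-1)x_n)\Vert_{\mathcal{M}_{pd}^\lambda(\mathbb{R}^n_+)}=(2s-1)^{|\alpha|-1/\lambda}\Vert D^\alpha f\Vert_{\mathcal{M}_{pd}^\lambda(\mathbb{R}^n_+)}$, presented as a consequence of the isotropic scaling $\Vert D^\alpha f(\gamma\cdot)\Vert_{\mathcal{M}_{pd}^\lambda}=\gamma^{|\alpha|-n/\lambda}\Vert D^\alpha f\Vert_{\mathcal{M}_{pd}^\lambda}$. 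Your proof instead runs the one-dimensional change of variables on the distribution function (giving the Lorentz factor $(2s-1)^{-1/p}$, not $(2s-1)^{-1/\lambda}$), and then absorbs the image $T_s(B^-)$ into a ball of radius $\lesssim sR$ with center in $\overline{\mathbb{R}^n_+}$ (giving the Morrey factor $s^{n(1/p-1/\lambda)}$). The resulting exponent $n(1/p-1/\lambda)-1/p$ agrees with the paper's $-1/\lambda$ only when $n=1$ or $p=\lambda$; since $\psi$ decays faster than any polynomial, both exponents give convergent integrals and the same conclusion, but your computation is the one that survives scrutiny for $n\ge 2$ and $p<\lambda$, where an anisotropic dilation does not transform the $n$-dimensional Morrey quasi-norm by a clean power law (balls go to ellipsoids, so one has only the one-sided inequality you derive). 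You also make explicit the mild geometric point about choosing the enlarged ball in $\overline{\mathbb{R}^n_+}$, which the paper leaves implicit. In short, same route, but yours is the more carefully justified version.
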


\begin{proof} For each $x'\in \mathbb{R}^{n-1}$ fixed and multi-index $\alpha$, the scaling property 
$
\left\Vert D^{\alpha} f(\gamma \cdot)\right\Vert_{\mathcal{M}_{p, d}^{\lambda}}=\gamma^{\vert\alpha\vert-\frac{n}{\lambda}}\left\Vert f\right\Vert _{\mathcal{M}_{p, d}^{\lambda}}
$ yields 
	\begin{equation}
	\Vert D^{\alpha}f(\cdot, (2s-1)x_n)\Vert_{\mathcal{M}_{p, d}^{\lambda}(\mathbb{R}_{+}^{n})}=(2s-1)^{\vert \alpha\vert-\frac{1}{\lambda}}\Vert D^{\alpha}f(\cdot,x_n)\Vert_{\mathcal{M}_{p, d}^{\lambda}(\mathbb{R}_{+}^{n})}\nonumber.
	\end{equation}
It follows that 
\begin{align*}
\left\Vert \frac{\partial}{\partial x_n}\mathcal{E}f \mathbbm{1}_{\{x_n<0\}}\right\Vert_{\mathcal{M}_{p, d}^{\lambda}(\mathbb{R}^{n})}&=\left\Vert \int_{1}^{\infty}\partial_n f(x^{\prime},(2s-1)x_{n})\psi(s)ds \right\Vert_{\mathcal{M}_{p,d}^{\lambda}(\mathbb{R}^{n}_+)}\\
&\leq \int_{1}^{\infty} (2s-1)\left\Vert \partial_n f(x^{\prime},(2s-1)x_{n}) \right\Vert_{\mathcal{M}_{p,d}^{\lambda}(\mathbb{R}^{n}_+)}\vert \psi(s)\vert ds\\
&\leq \left(\int_{1}^{\infty} (2s-1)^{2-\frac{1}{\lambda}}\vert \psi(s)\vert ds\right)\left\Vert \partial_n f \right\Vert_{\mathcal{M}_{p, d}^{\lambda }(\mathbb{R}^{n}_+)}\\
&\leq C\left\Vert \partial_nf \right\Vert_{\mathcal{M}_{p, d}^{\lambda }(\mathbb{R}^{n}_+)},
\end{align*}
because $\vert \psi(s)\vert \leq C s^{-N}$ for all $N$ implies 
$$\int_{1}^{\infty} (2s-1)^{2-\frac{1}{\lambda}}\vert \psi(s)\vert ds \leq C\int_1^{\infty}(s-1)^{\theta-1}s^{-\theta-(N-\theta)}ds=C \beta(\theta, N-\theta)$$ 
where $\beta(\theta,N-\theta)$ denotes the beta function and $\theta=3-1/\lambda$. Since $\mathcal{E}f=f$ in $\overline{\mathbb{R}^n_+}$, then $\Vert \nabla \mathcal{E}f\mathbbm{1}_{\{x_n\geq  0\}}\Vert_{\mathcal{M}_{p, d}^{\lambda}(\mathbb{R}^{n})}=\Vert \nabla f\Vert_{\mathcal{M}_{p, d}^{\lambda}(\mathbb{R}^{n}_+)}$ and moreover 
\begin{align*}
\left\Vert {\partial_{ x_j}}\mathcal{E}f\mathbbm{1}_{\{x_n<0\}}\right\Vert_{\mathcal{M}_{p, d}^{\lambda}(\mathbb{R}^{n})} \leq \left(\int_1^{\infty} (2s-1)^{1-\frac{1}{\lambda}}\vert\psi(s)\vert ds \right) \left\Vert {\partial_{ x_j}}f\right\Vert_{\mathcal{M}_{p, d}^{\lambda}(\mathbb{R}^{n}_+)}\leq C  \left\Vert {\partial_{ x_j}}f\right\Vert_{\mathcal{M}_{p, d}^{\lambda}(\mathbb{R}^{n}_+)}
\end{align*}
for all $j=1,\cdots,n-1$, as required.
\end{proof}
%Let $\bar{u}\in \mathcal{M}_{pl}^{\lambda}(\mathbb{R}^{n})$ be extension of $u$ and $\bar{f}=I_{\delta}\bar{u}$. 
Thanks to Theorem \ref{measure-restriction-thm} on $\partial\mathbb{R}^n_+$ with $\beta=n-1$, integral representation formula \cite[(3.5)]{Adams} and Lemma \ref{lem-ext1} 
\begin{equation*}
\Vert f(x',0)\Vert_{\mathcal{M}_{q, s}^{\lambda_{\ast}}(\partial\mathbb{R}^n_+)} \leq C \Vert \nabla \mathcal{E}f\Vert_{\mathcal{M}_{p, d}^{\lambda}(\mathbb{R}^{n})}\leq C \Vert
\nabla f\Vert_{\mathcal{M}^{\lambda}_{p, d}(\mathbb{R}_{+}^{n})}
\end{equation*}
as desired.

\section*{Acknowledgment.} The authors would like to thank the anonymous referees for careful reading. The remarks and suggestions pointed by referee's in Theorems 1.1 and 3.5, and Corollaries 1.3 and 1.4 improved our manuscript. This project was supported by Brazilian National Council for Scientific and Technological Development (CNPq-409306/2016).

\end{document}